\providecommand{\U}[1]{\protect\rule{.1in}{.1in}}
\theoremstyle{plain}
\newtheorem{theorem}{Theorem}[section]
\newtheorem{corollary}[theorem]{Corollary}
\newtheorem{proposition}[theorem]{Proposition}
\newtheorem{definition}[theorem]{Definition}
\begin{document}
\title{Nonnegative approximations of nonnegative tensors}
\author[L.-H.~Lim]{Lek-Heng Lim}
\address{Department of Mathematics, University of California, Berkeley, CA 94720-3840}
\email{lekheng@math.berkeley.edu}
\author[P.~Comon]{Pierre Comon}
\address{Lab.~I3S, CNRS UMR6070, University of Nice, F-06903, Sophia-Antipolis, France}
\email{pcomon@unice.fr}

\begin{abstract}
We study the decomposition of a nonnegative tensor into a minimal sum of outer
product of nonnegative vectors and the associated parsimonious na\"{\i}ve
Bayes probabilistic model. We show that the corresponding approximation
problem, which is central to nonnegative \textsc{parafac}, will always have
optimal solutions. The result holds for any choice of norms and, under a mild
assumption, even Br\`{e}gman divergences.

\end{abstract}
\keywords{Nonnegative tensors, nonnegative hypermatrices, nonnegative tensor
decompositions, nonnegative tensor rank, low-rank tensor approximations,
probabilistic latent semantic indexing, \textsc{candecomp}, \textsc{parafac},
tensor norm, tensor Br\`{e}gman divergence}
\maketitle

\section{Dedication}

This article is dedicated to the memory of our late colleague Richard Allan
Harshman. It is loosely organized around two of Harshman's best known works
--- \textsc{parafac} \cite{Ha} and \textsc{lsi} \cite{LSI}, and answers two
questions that he posed. We target this article at a technometrics readership.

In Section \ref{sec:NTD}, we discussed a few aspects of nonnegative tensor
factorization and Hofmann's \textsc{plsi}, a variant of the \textsc{lsi} model
co-proposed by Harshman \cite{LSI}. In Section \ref{SectDegen}, we answered a
question of Harshman on why the apparently unrelated construction of Bini,
Capovani, Lotti, and Romani in \cite{BCLR} should be regarded as the first
example of what he called `\textsc{parafac} degeneracy' \cite{KHL}. Finally in
Section \ref{sec:Exist}, we showed that such \textsc{parafac} degeneracy will
not happen for nonnegative approximations of nonnegative tensors, answering
another question of his.

\section{Introduction\label{sec:Intro}}

The \textit{decomposition} of a tensor into a minimal sum of outer products of
vectors was first studied by Hitchcock \cite{Hi1, Hi2} in 1927. The topic has
a long and illustrious history in algebraic computational complexity theory
(cf.\ \cite{BCS} and the nearly 600 references in its bibliography) dating
back to Strassen's celebrated result \cite{Strass}. It has also recently found
renewed interests, coming most notably from algebraic statistics and quantum computing.

However the study of the corresponding \textit{approximation} problem,
i.e.\ the approximation of a tensor by a sum of outer products of vectors,
probably first surfaced as data analytic models in psychometrics\ in the work
of Harshman \cite{Ha}, who called his model \textsc{parafac} (for Parallel
Factor Analysis), and the work of Carrol and Chang \cite{CarrC70:psy}, who
called their model \textsc{candecomp} (for Canonical Decomposition).

The \textsc{candecomp}/\textsc{parafac} model, sometimes abbreviated as
\textsc{cp} model, essentially asks for a solution to the following problem:
given a tensor $A\in\mathbb{R}^{d_{1}\times\dots\times d_{k}}$, find an
optimal rank-$r$ approximation to $A$,%
\begin{equation}
X_{r}\in\operatorname*{argmin}\nolimits_{\operatorname*{rank}(X)\leq r}\lVert
A-X\rVert, \label{approx}%
\end{equation}
or, more precisely, find scalars $\lambda_{p}$ and unit
vectors\footnote{Whenever possible, we will use $\mathbf{u}_{p},\mathbf{v}%
_{p},\dots,\mathbf{z}_{p}$ instead of the more cumbersome $\mathbf{u}%
_{p}^{(1)},\mathbf{u}_{p}^{(2)},\dots,\mathbf{u}_{p}^{(k)}$ to denote the
vector factors in an outer product. It is to be understood that there are $k$
vectors in \textquotedblleft$\mathbf{u}_{p},\mathbf{v}_{p},\dots
,\mathbf{z}_{p}$,\textquotedblright\ where $k\geq3$.} $\mathbf{u}%
_{p},\mathbf{v}_{p},\dots,\mathbf{z}_{p}$, $p=1,\dots,r$, that minimizes%
\begin{equation}
\left\Vert A-\sum\nolimits_{p=1}^{r}\lambda_{p}\,\mathbf{u}_{p}\otimes
\mathbf{v}_{p}\otimes\dots\otimes\mathbf{z}_{p}\right\Vert . \label{parafac}%
\end{equation}
The norm $\lVert\,\cdot\,\rVert$ here is arbitrary and we will discuss several
natural choices in the next section. When $k=2$, $A$ becomes a matrix and a
solution to the problem when $\lVert\,\cdot\,\rVert$ is unitarily invariant is
given by the celebrated Eckart-Young theorem: $X_{r}$ may be taken to be%
\[
X_{r}=\sum\nolimits_{p=1}^{r}\sigma_{p}\,\mathbf{u}_{p}\otimes\mathbf{v}_{p},
\]
where $\sigma_{1}\geq\dots\geq\sigma_{r}$ are the first $r$ singular values of
$A$ and $\mathbf{u}_{p},\mathbf{v}_{p}$ the corresponding left and right
singular vectors.

However when $k\geq3$ the problem becomes more subtle. In fact, a global
minimizer of \eqref{parafac} may not even exist as soon as $k\geq3$; in which
case the problem in \eqref{approx} is ill-posed because the set of minimizers
is empty. We refer the reader to Section \ref{SectDegen} for examples and
discussions. Nevertheless we will show that for nonnegative tensors the
problem of finding a best nonnegative rank-$r$ approximation always has a
solution, i.e.\ \eqref{parafac} will always have a global minimum when $A$ and
$\mathbf{u}_{p},\mathbf{v}_{p},\dots,\mathbf{z}_{p}$ are required to be
nonnegative. Such nonnegativity arises naturally in applications. For example,
in the context of chemometrics, sample concentration and spectral intensity
often cannot assume negative values \cite{BdJ, BS, CDP, KtB, Paa1, Paa2}.
Nonnegativity can also be motivated by the data analytic tenet \cite{LS1} that
the way `basis functions' combine to build `target objects' is an exclusively
additive process and should not involve any cancellations between the basis
functions. For $k=2$, this is the motivation behind \textit{nonnegative matrix
factorization} (\textsc{nmf}) \cite{LS1, Paa2}, essentially a decomposition of
a nonnegative matrix $A\in\mathbb{R}^{m\times n}$ into a sum of outer-products
of nonnegative vectors,
\[
A=WH^{\top}=\sum\nolimits_{p=1}^{r}\mathbf{w}_{p}\otimes\mathbf{h}_{p},
\]
or, in the noisy situation, the approximation of a nonnegative matrix by such
a sum:
\[
\min\nolimits_{W\geq0,H\geq0}\lVert A-WH^{\top}\rVert=\min
\nolimits_{\mathbf{w}_{p}\geq0,\mathbf{h}_{p}\geq0}\left\Vert A-\sum
\nolimits_{p=1}^{r}\mathbf{w}_{p}\otimes\mathbf{h}_{p}\right\Vert .
\]
The generalization of \textsc{nmf} to tensors of higher order yields a model
known as \textit{nonnegative }\textsc{parafac} \cite{CDP, KtB, Paa1}, which
has also been studied more recently under the name \textit{nonnegative tensor
factorization} (\textsc{ntf}) \cite{NTF}. As we have just mentioned, a general
tensor can fail to have a best low-rank approximation. So the first question
that one should ask in a multilinear generalization of a bilinear model is
whether the generalized problem would still have a solution --- and this was
the question that Harshman posed.\ More generally, we will show that
nonnegative \textsc{parafac} always has a solution for any continuous measure
of proximity satisfying some mild conditions, e.g.\ norms or Br\`{e}gman
divergences. These include the sum-of-squares loss and Kullback-Leibler
divergence commonly used in \textsc{nmf} and \textsc{ntf}.

The following will be proved in Sections \ref{sec:Exist} and \ref{secBreg}.
Let $\Omega_{0}\subseteq\Omega\subseteq\mathbb{R}_{+}^{d_{1}\times\dots\times
d_{k}}$ be closed convex subsets. Let $d:\Omega\times\Omega_{0}\rightarrow
\mathbb{R}$ be a norm or a Br\`{e}gman divergence. For any nonnegative tensor
$A\in\Omega$ and any given $r\in\mathbb{N}$, a \textit{best nonnegative
rank-}$r$\textit{ approximation} always exist in the sense that the following
infimum
\[
\inf\{d\left(  A,X\right)  \mid X\in\Omega_{0}, \operatorname*{rank}%
\nolimits_{+}(X)\leq r \}
\]
is attained by some nonnegative tensor $X_{r}\in\Omega_{0}$,
$\operatorname*{rank}_{+}(X_{r})\leq r$. In particular, the nonnegative tensor
approximation problem%
\[
X_{r}\in\operatorname*{argmin}\nolimits_{\operatorname*{rank}_{+}(X)\leq
r}\lVert A-X\rVert
\]
is well-posed. Here $\operatorname*{rank}_{+}(X)$ denotes the
\textit{nonnegative rank} of $X$ and will be formally introduced in Section
\ref{sec:NTD}.

\section{Tensors as hypermatrices\label{sec:Ten}}

Let $V_{1},\dots,V_{k}$ be real vector spaces of dimensions $d_{1},\dots
,d_{k}$ respectively. An element of the tensor product $V_{1}\otimes
\dots\otimes V_{k}$ is called an order-$k$ \textit{tensor}. Up to a choice of
bases on $V_{1},\dots,V_{k}$, such a tensor may be represented by a
$d_{1}\times\dots\times d_{k}$ array of real numbers\footnote{The subscripts
and superscripts will be dropped when the range of $j_{1},\dots,j_{k}$ is
obvious or unimportant. We use double brackets to delimit hypermatrices.},
\begin{equation}
A=\llbracket a_{j_{1}\cdots j_{k}}\rrbracket_{j_{1},\dots,j_{k}=1}%
^{d_{1},\dots,d_{k}}\in\mathbb{R}^{d_{1}\times\dots\times d_{k}}.
\label{tenspace}%
\end{equation}
Gelfand, Kapranov, and Zelevinsky called such coordinate representations of
abstract tensors \textit{hypermatrices} \cite{GKZ1}. It is worth pointing out
that an \textit{array} is just a data structure but like matrices,
hypermatrices are more than mere arrays of numerical values. They are equipped
with algebraic operations arising from the algebraic structure of
$V_{1}\otimes\dots\otimes V_{k}$:

\begin{itemize}
\item \textit{Addition and Scalar Multiplication}: For $\llbracket
a_{j_{1}\cdots j_{k}}\rrbracket,\llbracket b_{j_{1}\cdots j_{k}}%
\rrbracket\in\mathbb{R}^{d_{1}\times\dots\times d_{k}}$ and $\lambda,\mu
\in\mathbb{R}$,
\begin{equation}
\lambda\llbracket a_{j_{1}\cdots j_{k}}\rrbracket+\mu\llbracket b_{j_{1}\cdots
j_{k}}\rrbracket=\llbracket\lambda a_{j_{1}\cdots j_{k}}+\mu b_{j_{1}\cdots
j_{k}}\rrbracket\in\mathbb{R}^{d_{1}\times\dots\times d_{k}}. \label{prop1}%
\end{equation}

\item \textit{Outer Product Decomposition}: Every $A=\llbracket a_{j_{1}\cdots
j_{k}}\rrbracket\in\mathbb{R}^{d_{1}\times\dots\times d_{k}}$ may be
decomposed as%
\begin{equation}
A=\sum\nolimits_{p=1}^{r}\lambda_{p}\,\mathbf{u}_{p}\otimes\mathbf{v}%
_{p}\otimes\dots\otimes\mathbf{z}_{p},\qquad a_{j_{1}\cdots j_{k}}%
=\sum\nolimits_{p=1}^{r}\lambda_{p}u_{pj_{1}}v_{pj_{2}}\cdots z_{pj_{k}},
\label{prop2a}%
\end{equation}
with $\lambda_{p}\in\mathbb{R}$, $\mathbf{u}_{p}=[u_{p1},\dots,u_{pd_{1}%
}]^{\top}\in\mathbb{R}^{d_{1}},\dots,\mathbf{z}_{p}=[z_{p1},\dots,z_{pd_{k}%
}]^{\top}\in\mathbb{R}^{d_{k}}$, $p=1,\dots,r$.
\end{itemize}

The symbol $\otimes$ denotes the \textit{Segre outer product}: For vectors
$\mathbf{x}=[x_{1},\dots,x_{l}]^{\top}\in\mathbb{R}^{l}$, $\mathbf{y}%
=[y_{1},\dots,y_{m}]^{\top}\in\mathbb{R}^{m}$, $\mathbf{z}=[z_{1},\dots
,z_{n}]^{\top}\in\mathbb{R}^{n}$, the quantity $\mathbf{x}\otimes
\mathbf{y}\otimes\mathbf{z}$, is simply the $3$-hypermatrix $\llbracket
x_{i}y_{j}z_{k}\rrbracket_{i,j,k=1}^{l,m,n}\in\mathbb{R}^{l\times m\times n}$,
with obvious generalization to an arbitrary number of vectors.

It follows from \eqref{prop1} that $\mathbb{R}^{d_{1}\times\dots\times d_{k}}$
is a vector space of dimension $d_{1}\cdots d_{k}$. The existence of a
decomposition \eqref{prop2a} distinguishes $\mathbb{R}^{d_{1}\times\dots\times
d_{k}}$ from being merely a vector space by endowing it with a tensor product
structure. While as real vector spaces, $\mathbb{R}^{l\times m\times n}$
(hypermatrices), $\mathbb{R}^{lm\times n},\mathbb{R}^{ln\times m}%
,\mathbb{R}^{mn\times l}$ (matrices), and $\mathbb{R}^{lmn}$ (vectors) are all
isomorphic, the tensor product structure distinguishes them. Note that a
different choice of bases on $V_{1},\dots,V_{k}$ would lead to a different
hypermatrix representation of elements in $V_{1}\otimes\dots\otimes V_{k}$. So
strictly speaking, a tensor and a hypermatrix are different in the same way a
linear operator and a matrix are different. Furthermore, just as a bilinear
functional, a linear operator, and a dyad may all be represented by the same
matrix, different types of tensors may be represented by the same hypermatrix
if one disregards covariance and contravariance. Nonetheless the term `tensor'
has been widely used to mean a hypermatrix in the data analysis communities
(including bioinformatics, computer vision, machine learning,
neuroinformatics, pattern recognition, signal processing, technometrics), and
we will refrain from being perverse and henceforth adopt this naming
convention. For the more pedantic readers, it is understood that what we call
a tensor in this article really means a hypermatrix.

A non-zero tensor that can be expressed as an outer product of vectors is
called a rank-1 tensor. More generally, the \textit{rank} of a tensor
$A=\llbracket a_{j_{1}\cdots j_{k}}\rrbracket_{j_{1},\dots,j_{k}=1}%
^{d_{1},\dots,d_{k}}\in\mathbb{R}^{d_{1}\times\dots\times d_{k}}$, denoted
$\operatorname*{rank}(A)$, is defined as the minimum $r$ for which $A$ may be
expressed as a sum of $r$ rank-1 tensors \cite{Hi1, Hi2},%
\begin{equation}
\operatorname*{rank}(A):=\min\Bigl\{r\Bigm|A=\sum\nolimits_{p=1}^{r}%
\lambda_{p}\,\mathbf{u}_{p}\otimes\mathbf{v}_{p}\otimes\dots\otimes
\mathbf{z}_{p}\Bigr\}. \label{cp2}%
\end{equation}
The definition of rank in \eqref{cp2} agrees with the definition of matrix
rank when applied to an order-$2$ tensor.

The \textit{Frobenius norm} or $F$-\textit{norm} of a tensor $A=\llbracket
a_{j_{1}\cdots j_{k}}\rrbracket_{j_{1},\dots,j_{k}=1}^{d_{1},\dots,d_{k}}%
\in\mathbb{R}^{d_{1}\times\dots\times d_{k}}$ is defined by%
\begin{equation}
\lVert A\rVert_{F}=\Bigl[\sum\nolimits_{j_{1},\dots,j_{k}=1}^{d_{1}%
,\dots,d_{k}}\lvert a_{j_{1}\cdots j_{k}}\rvert^{2}\Bigr]^{\frac{1}{2}}.
\label{Frob}%
\end{equation}
The $F$-norm is by far the most popular choice of norms for tensors in data
analytic applications. However when $A$ is nonnegative valued, then there is a
more natural norm that allows us to interpret the normalized values of $A$ as
probability distribution values, as we will see in the next section. With this
in mind, we define the $E$-\textit{norm} and $G$-\textit{norm} by%
\begin{equation}
\lVert A\rVert_{E}=\sum\nolimits_{i_{1},\dots,i_{k}=1}^{d_{1},\dots,d_{k}%
}\lvert a_{j_{1}\cdots j_{k}}\rvert\label{Delta}%
\end{equation}
and%
\[
\lVert A\rVert_{G}=\max\{\lvert a_{j_{1}\cdots j_{k}}\rvert\mid j_{1}%
=1,\dots,d_{1};\dots;j_{k}=1,\dots,d_{k}\}.
\]
Observe that the $E$-, $F$-, and $G$-norms of a tensor $A$ are simply the
$l^{1}$-, $l^{2}$-, and $l^{\infty}$-norms of $A$ regarded as a vector of
dimension $d_{1}\cdots d_{k}$. Furthermore they are multiplicative on rank-$1$
tensors in the following sense:%
\begin{align}
\lVert\mathbf{u}\otimes\mathbf{v}\otimes\dots\otimes\mathbf{z}\rVert_{E}  &
=\lVert\mathbf{u}\rVert_{1}\lVert\mathbf{v}\rVert_{1}\cdots\lVert
\mathbf{z}\rVert_{1},\label{norm1}\\
\lVert\mathbf{u}\otimes\mathbf{v}\otimes\dots\otimes\mathbf{z}\rVert_{F}  &
=\lVert\mathbf{u}\rVert_{2}\lVert\mathbf{v}\rVert_{2}\cdots\lVert
\mathbf{z}\rVert_{2},\nonumber\\
\lVert\mathbf{u}\otimes\mathbf{v}\otimes\dots\otimes\mathbf{z}\rVert_{G}  &
=\lVert\mathbf{u}\rVert_{\infty}\lVert\mathbf{v}\rVert_{\infty}\cdots
\lVert\mathbf{z}\rVert_{\infty}.\nonumber
\end{align}
The $F$-norm has the advantage of being induced by an inner product on
$\mathbb{R}^{d_{1}\times\dots\times d_{k}}$, namely,%
\begin{equation}
\langle A,B\rangle=\sum\nolimits_{j_{1},\dots,j_{k}=1}^{d_{1},\dots,d_{k}%
}a_{j_{1}\cdots j_{k}}b_{j_{1}\cdots j_{k}}. \label{ip}%
\end{equation}
As usual, it is straightforward to deduce a Cauchy-Schwarz inequality%
\[
\lvert\langle A,B\rangle\rvert\leq\lVert A\rVert_{F}\lVert B\rVert_{F},
\]
and a H\"{o}lder inequality%
\[
\lvert\langle A,B\rangle\rvert\leq\lVert A\rVert_{E}\lVert B\rVert_{G}.
\]
Many other norms may be defined on a space of tensors. For any $1\leq
p\leq\infty$, one may define the $l^{p}$-equivalent of \eqref{Frob}, of which
$E$-, $F$-, and $G$-norms are special cases. Another common class of tensor
norms generalizes operator norms of matrices: For example if $A=\llbracket
a_{ijk}\rrbracket\in\mathbb{R}^{l\times m\times n}$ and%
\[
A(\mathbf{x},\mathbf{y},\mathbf{z}):=\sum\nolimits_{i,j,k=1}^{l,m,n}%
a_{ijk}x_{i}y_{j}z_{j_{k}}%
\]
denotes the associated trilinear functional, then%
\[
\lVert A\rVert_{p,q,r}:=\sup_{\mathbf{x},\mathbf{y},\mathbf{z}\neq\mathbf{0}%
}\frac{\lvert A(\mathbf{x},\mathbf{y},\mathbf{z})\rvert}{\lVert\mathbf{x}%
\rVert_{p}\lVert\mathbf{y}\rVert_{q}\lVert\mathbf{z}\rVert_{r}}%
\]
defines a norm for any $1\leq p,q,r\leq\infty$. Nevertheless all these norms
are equivalent (and thus induce the same topology) since the tensor product
spaces here are finite-dimensional. In particular, the results in this paper
apply to any choice of norms since they pertain to the convergence of
sequences of tensors.

The discussion in this section remains unchanged if $\mathbb{R}$ is replaced
by $\mathbb{C}$ throughout (apart from a corresponding replacement of the
Euclidean inner product in \eqref{ip} by the Hermitian inner product) though a
minor caveat is that the tensor rank as defined in \eqref{cp2} depends on the
choice of base fields (see \cite{dSL} for a discussion).

\section{Nonnegative decomposition of nonnegative tensors\label{sec:NTD}}

We will see that a finite collection of discrete random variables satisfying
both the na\"{\i}ve Bayes hypothesis and the Ockham principle of parsimony
have a joint probability distribution that, when regarded as a nonnegative
tensor on the probability simplex, decomposes in a nonnegative rank-revealing
manner that parallels the matrix singular value decomposition. This
generalizes Hofmann's probabilistic variant \cite{Hoff} of latent semantic
indexing (\textsc{lsi}), a well-known technique in natural language processing
and information retrieval that Harshman played a role in developing
\cite{LSI}. Nonnegative tensor decompositions were first studied in the
context of \textsc{parafac} with nonnegativity constraints by the
technometrics communities \cite{BdJ, BS, CDP, KtB, Paa1}. The interpretation
as a na\"{\i}ve Bayes decomposition of probability distributions into
conditional distributions was due to Garcia, Stillman, and Sturmfels
\cite{GSS} and Sashua and Hazan \cite{NTF}. It is perhaps worth taking this
opportunity to point out a minor detail that had somehow been neglected in
\cite{GSS, NTF}: the na\"{\i}ve Bayes hypothesis is not sufficient to
guarantee a nonnegative rank-revealing decomposition, one also needs the
Ockham principle of parsimony, i.e.\ the hidden variable in question has to be
minimally supported.

A tensor $A=\llbracket a_{j_{1}\cdots j_{k}}\rrbracket_{j_{1},\dots,j_{k}%
=1}^{d_{1},\dots,d_{k}}\in\mathbb{R}^{d_{1}\times\dots\times d_{k}}$ is
\textit{nonnegative}, denoted $A\geq0$, if all $a_{j_{1}\cdots j_{k}}\geq0$.
We will write $\mathbb{R}_{+}^{d_{1}\times\dots\times d_{k}}:=\{A\in
\mathbb{R}^{d_{1}\times\dots\times d_{k}}\mid A\geq0\}$. For $A\geq0$, a
\textit{nonnegative outer-product decomposition} is one of the form%
\begin{equation}
A=\sum\nolimits_{p=1}^{r}\delta_{p}\,\mathbf{u}_{p}\otimes\mathbf{v}%
_{p}\otimes\dots\otimes\mathbf{z}_{p} \label{eqNTD}%
\end{equation}
where $\delta_{p}\geq0$ and $\mathbf{u}_{p},\mathbf{v}_{p},\dots
,\mathbf{z}_{p}\geq0$ for $p=1,\dots,r$. It is clear that such a decomposition
exists for any $A\geq0$. The minimal $r$ for which such a decomposition is
possible will be called the \textit{nonnegative rank}. For $A\geq0$, this is
denoted and defined via%
\[
\operatorname*{rank}\nolimits_{+}(A):=\min\Bigl\{r\Bigm|A=\sum\nolimits_{p=1}%
^{r}\delta_{p}\,\mathbf{u}_{p}\otimes\mathbf{v}_{p}\otimes\dots\otimes
\mathbf{z}_{p},\quad\delta_{p},\mathbf{u}_{p},\mathbf{v}_{p},\dots
,\mathbf{z}_{p}\geq0\text{ for all }p\Bigr\}.
\]

Let $\Delta^{d}$ denote the \textit{unit }$d$-\textit{simplex}, i.e.\ the
convex hull of the standard basis vectors in $\mathbb{R}^{d+1}$. Explicitly,%
\[
\Delta^{d}:=\Bigl\{\sum\nolimits_{p=1}^{d+1}\delta_{p}\mathbf{e}_{p}%
\in\mathbb{R}^{d+1}\Bigm|\sum\nolimits_{p=1}^{d+1}\delta_{p}=1,\quad\delta
_{1},\dots,\delta_{d+1}\geq0\Bigr\}=\{\mathbf{x}\in\mathbb{R}_{+}^{d+1}%
\mid\lVert\mathbf{x}\rVert_{1}=1\}.
\]
For nonnegative valued tensors, the $E$-norm has the advantage that
\eqref{Delta} reduces to a simple sum of all entries. This simple observation
leads to the following proposition stating that the decomposition in
\eqref{eqNTD} may be realized over unit simplices if we normalize $A$ by its
$E$-norm.

\begin{proposition}
\label{prop:nntd} Let $A\in\mathbb{R}_{+}^{d_{1}\times\dots\times d_{k}}$ be a
nonnegative tensor with $\operatorname*{rank}_{+}(A)=r$. Then there exist
$\boldsymbol{\delta}=[\delta_{1},\dots,\delta_{r}]^{\top}\in\mathbb{R}_{+}%
^{r}$, $\mathbf{u}_{p}\in\mathbb{R}_{+}^{d_{1}-1},\mathbf{v}_{p}\in
\mathbb{R}_{+}^{d_{2}-1},\dots,\mathbf{z}_{p}\in\mathbb{R}_{+}^{d_{k}-1}$,
$p=1,\dots,r$, where%
\[
\lVert\boldsymbol{\delta}\rVert_{1}=\lVert A\rVert_{E}%
\]
and%
\[
\lVert\mathbf{u}_{p}\rVert_{1}=\lVert\mathbf{v}_{p}\rVert_{1}=\dots
=\lVert\mathbf{z}_{p}\rVert_{1}=1,
\]
such that%
\begin{equation}
A=\sum\nolimits_{p=1}^{r}\delta_{p}\,\mathbf{u}_{p}\otimes\mathbf{v}%
_{p}\otimes\dots\otimes\mathbf{z}_{p}. \label{ud}%
\end{equation}

\end{proposition}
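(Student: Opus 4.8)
The plan is to start from any nonnegative rank-$r$ decomposition $A=\sum_{p=1}^{r}\gamma_{p}\,\mathbf{x}_{p}\otimes\mathbf{y}_{p}\otimes\dots\otimes\mathbf{w}_{p}$ with $\gamma_{p}\ge 0$ and $\mathbf{x}_{p},\dots,\mathbf{w}_{p}\ge 0$, which exists by the definition of $\operatorname*{rank}_{+}(A)=r$, and simply rescale each rank-$1$ term so that every vector factor lies on a unit simplex. First I would discard any degenerate terms: if some $\gamma_{p}=0$ or one of its vector factors is the zero vector, that term contributes nothing and can be dropped (this only decreases the number of terms, and we may pad back with trivial terms carrying $\delta_{p}=0$ if we insist on exactly $r$ summands). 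For each surviving $p$, each factor vector, say $\mathbf{x}_{p}\in\mathbb{R}_{+}^{d_{1}}$, is nonzero and nonnegative, so $\lVert\mathbf{x}_{p}\rVert_{1}>0$; set $\mathbf{u}_{p}:=\mathbf{x}_{p}/\lVert\mathbf{x}_{p}\rVert_{1}$, and similarly $\mathbf{v}_{p},\dots,\mathbf{z}_{p}$ for the other modes, and absorb all the scaling constants into the coefficient by putting $\delta_{p}:=\gamma_{p}\lVert\mathbf{x}_{p}\rVert_{1}\lVert\mathbf{y}_{p}\rVert_{1}\cdots\lVert\mathbf{w}_{p}\rVert_{1}\ge 0$. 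Since the Segre outer product is multilinear, $\gamma_{p}\,\mathbf{x}_{p}\otimes\dots\otimes\mathbf{w}_{p}=\delta_{p}\,\mathbf{u}_{p}\otimes\dots\otimes\mathbf{z}_{p}$ termwise, so summing recovers \eqref{ud}, and each normalized factor lies in the appropriate $\Delta^{d_{i}-1}$ because it is nonnegative with unit $l^{1}$-norm.

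It remains to verify the normalization claim $\lVert\boldsymbol{\delta}\rVert_{1}=\lVert A\rVert_{E}$. The key point here is that the $E$-norm is just the $l^{1}$-norm on the entries, and all quantities in sight are nonnegative, so there are no cancellations: $\lVert A\rVert_{E}=\sum_{j_{1},\dots,j_{k}}a_{j_{1}\cdots j_{k}}=\sum_{j_{1},\dots,j_{k}}\sum_{p=1}^{r}\delta_{p}u_{pj_{1}}\cdots z_{pj_{k}}=\sum_{p=1}^{r}\delta_{p}\bigl(\sum_{j_{1}}u_{pj_{1}}\bigr)\cdots\bigl(\sum_{j_{k}}z_{pj_{k}}\bigr)=\sum_{p=1}^{r}\delta_{p}\lVert\mathbf{u}_{p}\rVert_{1}\cdots\lVert\mathbf{z}_{p}\rVert_{1}=\sum_{p=1}^{r}\delta_{p}=\lVert\boldsymbol{\delta}\rVert_{1}$, using the multiplicativity \eqref{norm1} of the $E$-norm on rank-$1$ tensors together with $\lVert\mathbf{u}_{p}\rVert_{1}=\dots=\lVert\mathbf{z}_{p}\rVert_{1}=1$ and $\delta_{p}\ge 0$ so that $\lvert\delta_{p}\rvert=\delta_{p}$. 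One can phrase this even more cleanly: taking the $E$-norm of both sides of \eqref{ud} and invoking the triangle inequality would only give $\le$, but because every summand is a nonnegative tensor the triangle inequality is an equality, which is exactly the observation flagged before the proposition that \eqref{Delta} reduces to a plain sum of entries.

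This argument is almost entirely bookkeeping; there is no real obstacle. The only point requiring a word of care is the minimality assertion implicit in the statement — namely that after this rescaling we still have exactly $r$ terms and hence a genuine nonnegative rank-$r$ decomposition — but this is immediate, since the rescaling is an invertible, term-by-term operation that preserves the number of nonzero rank-$1$ summands, so a decomposition with fewer than $r$ normalized terms would pull back to a nonnegative decomposition of $A$ with fewer than $r$ terms, contradicting $\operatorname*{rank}_{+}(A)=r$. (If instead one only wants \emph{some} representation of the form \eqref{ud} with at most $r$ terms, even this remark is unnecessary.) Thus the whole proof is: take any nonnegative decomposition, normalize the factors, push the scalars into the $\delta_{p}$, and read off $\lVert\boldsymbol{\delta}\rVert_{1}=\lVert A\rVert_{E}$ from the nonnegativity-induced equality in the triangle inequality.
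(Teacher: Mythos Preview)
Your argument is correct and is essentially the same as the paper's: start from a minimal nonnegative decomposition, normalize each factor vector to unit $l^{1}$-norm while absorbing the scalars into $\delta_{p}$, and then use nonnegativity (so that the triangle inequality for the $E$-norm is an equality) together with \eqref{norm1} to read off $\lVert\boldsymbol{\delta}\rVert_{1}=\lVert A\rVert_{E}$. The only cosmetic difference is that the paper invokes minimality of $r$ up front to rule out zero factors, whereas you phrase this as ``discard degenerate terms and pad back''; by minimality no such terms occur in a length-$r$ decomposition of a nonzero $A$, so the two formulations coincide.
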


\begin{proof}
If $A=0$, this is obvious. So we will suppose that $A\neq0$. By the minimality
of $r=\operatorname*{rank}_{+}(A)$, we know that $\mathbf{u}_{p}%
,\mathbf{v}_{p},\dots,\mathbf{z}_{p}$ in \eqref{ud} are all nonzero and we may
assume that%
\[
\lVert\mathbf{u}_{p}\rVert_{1}=\lVert\mathbf{v}_{p}\rVert_{1}=\dots
=\lVert\mathbf{z}_{p}\rVert_{1}=1
\]
since otherwise we may normalize%
\[
\mathbf{\hat{u}}_{p}=\mathbf{u}_{p}/\lVert\mathbf{u}_{p}\rVert_{1}%
,\mathbf{\hat{v}}_{p}=\mathbf{v}_{p}/\lVert\mathbf{v}_{p}\rVert_{1}%
,\dots,\mathbf{\hat{z}}_{p}=\mathbf{z}_{p}/\lVert\mathbf{z}_{p}\rVert_{1},
\]
and set%
\[
\hat{\delta}_{p}=\delta_{p}\lVert\mathbf{u}_{p}\rVert_{1}\lVert\mathbf{v}%
_{p}\rVert_{1}\cdots\lVert\mathbf{z}_{p}\rVert_{1},
\]
and still have an equation of the form in \eqref{ud}. It remains to show that%
\[
\lVert\boldsymbol{\delta}\rVert_{1}=\lVert A\rVert_{E}.
\]
Note that since all quantities involved are nonnegative,%
\[
\lVert A\rVert_{E}=\left\Vert \sum\nolimits_{p=1}^{r}\delta_{p}\,\mathbf{u}%
_{p}\otimes\mathbf{v}_{p}\otimes\dots\otimes\mathbf{z}_{p}\right\Vert
_{E}=\sum\nolimits_{p=1}^{r}\delta_{p}\lVert\mathbf{u}_{p}\otimes
\mathbf{v}_{p}\otimes\dots\otimes\mathbf{z}_{p}\rVert_{E}.
\]
By \eqref{norm1}, the \textsc{rhs} can be further simplified to%
\[
\sum\nolimits_{p=1}^{r}\delta_{p}\lVert\mathbf{u}_{p}\rVert_{1}\lVert
\mathbf{v}_{p}\rVert_{1}\cdots\lVert\mathbf{z}_{p}\rVert_{1}=\sum
\nolimits_{p=1}^{r}\delta_{p}=\lVert\boldsymbol{\delta}\rVert_{1},
\]
as required.
\end{proof}

Note that the conditions on the vectors imply that they lie in unit simplices
of various dimensions:%
\begin{equation}
\mathbf{u}_{1},\dots,\mathbf{u}_{r}\in\Delta^{d_{1}-1},\quad\mathbf{v}%
_{1},\dots,\mathbf{v}_{r}\in\Delta^{d_{2}-1},\quad\dots,\quad\mathbf{z}%
_{1},\dots,\mathbf{z}_{r}\in\Delta^{d_{k}-1}. \label{sim}%
\end{equation}

For $k=2$, the above decomposition is best viewed as a parallel to the
singular value decomposition of a matrix $A\in\mathbb{R}^{m\times n}$, which
is in particular an expression of the form%
\begin{equation}
A=\sum\nolimits_{p=1}^{r}\sigma_{p}\,\mathbf{u}_{p}\otimes\mathbf{v}_{p},
\label{svd}%
\end{equation}
where $r=\operatorname*{rank}(A)$,%
\[
\lVert\boldsymbol{\sigma}\rVert_{2}=\Bigl[\sum\nolimits_{p=1}^{r}\lvert
\sigma_{p}\rvert^{2}\Bigr]^{\frac{1}{2}}=\lVert A\rVert_{F},\quad
\text{and\quad}\lVert\mathbf{u}_{p}\rVert_{2}=\lVert\mathbf{v}_{p}\rVert
_{2}=1,
\]
for all $p=1,\dots,r$. Here $\boldsymbol{\sigma}=[\sigma_{1},\dots,\sigma
_{r}]^{\top}\in\mathbb{R}^{r}$ is the vector of nonzero singular values of
$A$. If $A$ is normalized to have unit $F$-norm, then all quantities in
\eqref{svd} may be viewed as living in unit spheres of various dimensions:
$A\in\mathbb{S}^{mn-1}$, $\boldsymbol{\sigma}\in\mathbb{S}^{r-1}$,
$\mathbf{u}_{1},\dots,\mathbf{u}_{r}\in\mathbb{S}^{m-1}$, $\mathbf{v}%
_{1},\dots,\mathbf{v}_{r}\in\mathbb{S}^{n-1}$ where $\mathbb{S}^{d-1}%
=\{\mathbf{x}\in\mathbb{R}^{d}\mid\lVert\mathbf{x}\rVert_{2}=1\}$ is the unit
sphere in $\mathbb{R}^{d}$. For $k=2$, the nonnegative matrix decomposition in
Proposition \ref{prop:nntd} is one where the unit spheres are replaced by unit
simplices and the $l^{2}$- and $F$-norms replaced by the $l^{1}$- and
$E$-norms. An obvious departure from the case of \textsc{svd} is that the
vectors in \eqref{sim} are not orthogonal.

Henceforth when we use the terms \textsc{ntf} and \textsc{nmf}, we will mean a
decomposition of the type in Proposition \ref{prop:nntd}. For a nonnegative
tensor with unit $E$-norm, $A\in\Delta^{d_{1}\cdots d_{k}-1}$, the
decomposition in Proposition \ref{prop:nntd} has a probabilistic interpretation.

Let $U,V,\dots,Z$ be discrete random variables and $q(u,v,\dots,z)=\Pr
(U=u,V=v,\dots,Z=z)$ be their joint probability distribution. Suppose
$U,V,\dots,Z$ satisfy the \textit{na\"{\i}ve Bayes hypothesis}, i.e.\ they are
conditionally independent upon a single hidden random variable $\Theta$. Let
$q_{1}(u\mid\theta),q_{2}(v\mid\theta),\dots,q_{k}(z\mid\theta)$ denote
respectively the marginal probability distributions of $U,V,\dots,Z$
conditional on the event $\Theta=\theta$. Then the probability distributions
must satisfy the relation
\begin{equation}
q(u,v,\dots,z)=\sum\nolimits_{\theta=1}^{r}\delta(\theta)\,q_{1}(u\mid
\theta)q_{2}(v\mid\theta)\cdots q_{k}(z\mid\theta) \label{eq:NB}%
\end{equation}
where $\delta(\theta)=\Pr(\Theta=\theta)$. Since the discrete random variables
$U,V,\dots,Z$ may take $d_{1},d_{2},\dots,d_{k}$ possible values respectively,
the Bayes rule in \eqref{eq:NB} can be rewritten as the tensor decomposition
in \eqref{ud}, provided we `store' the marginal distributions $q_{1}%
(u\mid\theta),q_{2}(v\mid\theta),\dots,q_{k}(z\mid\theta)$ in the vectors
$\mathbf{u}_{\theta},\mathbf{v}_{\theta},\dots,\mathbf{z}_{\theta}$
respectively. The requirement that $r=\operatorname*{rank}_{+}(A)$ corresponds
to the \textit{Ockham principle of parsimony}: that the model \eqref{eq:NB} be
the simplest possible, i.e.\ the hidden variable $\Theta$ be minimally supported.

For the case $k=2$, \eqref{eq:NB} is Hofmann's \textsc{plsi} \cite{Hoff}, a
probabilistic variant of \textit{latent semantic indexing} \cite{LSI}. While
it is known \cite{GG} that the multiplicative updating rule for \textsc{nmf}
with \textsc{kl} divergence in \cite{LS1} is equivalent to the use of
\textsc{em} algorithm for maximum likelihood estimation of \textsc{plsi} in
\cite{Hoff}, this is about the equivalence of two algorithms (\textsc{em} and
multiplicative updating) applied to two approximation problems (maximum
likelihood of \textsc{plsi} and minimum \textsc{kl} divergence of
\textsc{nmf}). Since the \textsc{em} algorithm and the \textsc{nmf}
multiplicative updating rules are first-order methods that can at best
converge to a stationary point, saying that these two algorithms are
equivalent for their respective approximation problems does not imply that the
respective models are equivalent. The preceding paragraph states that the
probabilistic relational models behind \textsc{plsi} and \textsc{ntf} (and
therefore \textsc{nmf}) are one and the same --- a collection of random
variables satisfying the na\"{\i}ve Bayes assumption with respect to a
parsimonious hidden variable. This is a statement independent of approximation
or computation.

\section{Nonexistence of globally optimal solution for real and complex tensor
approximations\label{SectDegen}}

A major difficulty that one should be aware of is that the problem of finding
a best rank-$r$ approximation for tensors of order $3$ or higher has no
solution in general. There exists $A\in\mathbb{R}^{d_{1}\times\dots\times
d_{k}}$ such that%
\begin{equation}
\inf\left\Vert A-\sum\nolimits_{p=1}^{r}\lambda_{p}\,\mathbf{u}_{p}%
\otimes\mathbf{v}_{p}\otimes\dots\otimes\mathbf{z}_{p}\right\Vert \label{inf}%
\end{equation}
is not attained by \textit{any} choice of $\lambda_{p},\mathbf{u}%
_{p},\mathbf{v}_{p},\dots,\mathbf{z}_{p}$, $p=1,\dots,r$. It is also in
general not possible to determine a priori if a given $A\in\mathbb{R}%
^{d_{1}\times\dots\times d_{k}}$ will fail to have a best rank-$r$
approximation. This problem is more widespread than one might imagine. It has
been shown in \cite{dSL} that examples of this failure happens over a wide
range of dimensions, orders, ranks, and for any continuous measure of
proximity (thus including all norms and Br\`{e}gman divergence). Moreover such
failures can occur with positive probability and in some cases with certainty,
i.e.\ where the infimum in \eqref{inf} is \textit{never} attained. This
phenomenon also extends to symmetric tensors \cite{CGLM}.

This poses some serious conceptual difficulties --- if one cannot guarantee a
solution a priori, then what is one trying to compute in instances where there
are no solutions? We often get the answer \textquotedblleft an approximate
solution\textquotedblright. But how could one approximate a solution that does
not even exist in the first place? Conceptual issues aside, this also causes
computational difficulties in practice. Forcing a solution in finite precision
for a problem that does not have a solution is an ill-advised strategy since a
well-posed problem near to an ill-posed one is, by definition, ill-conditioned
and therefore hard to compute. This ill-conditioning manifests itself in
iterative algorithms as summands that grew unbounded in magnitude but with the
peculiar property that the sum remains bounded. This was first observed by
Bini, Lotti, and Romani \cite{BLR} in the context of arbitrary precision
approximations (where this phenomenon is desirable). Independently Harshman
and his collaborators Kruskal and Lundy \cite{KHL} also investigated this
phenomenon, which they called \textsc{parafac} degeneracy, from the
perspective of model fitting (where it is undesirable). In Theorem \ref{thmE},
we will prove the cheerful fact that one does not need to worry about
\textsc{parafac} degeneracy when fitting a nonnegative \textsc{parafac} model.

The first published account of an explicitly constructed example of
\textsc{parafac} degeneracy appeared in a study of the complexity of matrix
multiplication by Bini, Capovani, Lotti, and Romani \cite{BCLR}. However their
discussion was for a context entirely different from data analysis/model
fitting and was presented in notations somewhat unusual. Until today, many
remain unconvinced that the construction in \cite{BCLR} indeed provides an
explicit example of \textsc{parafac} degeneracy and continue to credit the
much later work of Paatero \cite{Paa}. The truth is that such constructions
are well-known in algebraic computational complexity; in addition to
\cite{BCLR}, one may also find them in \cite{BLR, BCS, Kn}, all predating
\cite{Paa}. As a small public service\footnote{And also to fulfill, belatedly,
an overdued promise made to Richard when he was preparing his bibliography on
\textsc{parafac} degeneracy.}, we will translate the original construction of
Bini, Capovani, Lotti, and Romani into notations more familiar to the
technometrics communities.

In \cite{BCLR}, Bini, Capovani, Lotti, and Romani gave an algorithm that can
approximate to arbitrary precision the product of two $n\times n$ matrices and
requires only $O(n^{2.7799})$ scalar multiplications. The key to their
construction is the following triplet of matrices which at first glance seem
somewhat mysterious:%

\[
U=%
\begin{bmatrix}
1 & 0 & 1 & 0 & 1\\
0 & 0 & 0 & \varepsilon & \varepsilon\\
1 & 1 & 0 & 1 & 0
\end{bmatrix}
,\quad V=%
\begin{bmatrix}
\varepsilon & 0 & 0 & -\varepsilon & 0\\
0 & -1 & 0 & 1 & 0\\
0 & 0 & 0 & 0 & \varepsilon\\
1 & -1 & 1 & 0 & 1
\end{bmatrix}
,\quad W=%
\begin{bmatrix}
\varepsilon^{-1} & \varepsilon^{-1} & -\varepsilon^{-1} & \varepsilon^{-1} &
0\\
0 & 0 & 0 & 1 & 0\\
0 & 0 & -\varepsilon^{-1} & 0 & \varepsilon^{-1}\\
1 & 0 & 0 & 0 & -1
\end{bmatrix}
.
\]
We will show that these matrices may be used to construct a sequence of
tensors exhibiting \textsc{parafac} degeneracy.

We will assume that $U$ has a $4$th row of zeros and so $U,V,W\in
\mathbb{R}^{4\times4}$. As usual, $u_{ij},v_{ij},w_{ij}$ will denote the
$(i,j)$th entry of the respective matrices. Let $n\geq4$ and $\mathbf{x}%
_{1},\mathbf{x}_{2},\mathbf{x}_{3},\mathbf{x}_{4}\in\mathbb{R}^{n}$ (or
$\mathbb{C}^{n}$) be linearly independent vectors. For $\varepsilon>0$, define%
\[
A_{\varepsilon}:=\sum\nolimits_{j=1}^{4}\Bigl[\Bigl(\sum\nolimits_{i=1}%
^{4}u_{ij}\mathbf{x}_{i}\Bigr)\otimes\Bigl(\sum\nolimits_{i=1}^{4}%
v_{ij}\mathbf{x}_{i}\Bigr)\otimes\Bigl(\sum\nolimits_{i=1}^{4}w_{ij}%
\mathbf{x}_{i}\Bigr)\Bigr].
\]
Observe that%
\begin{multline*}
A_{\varepsilon}=(\mathbf{x}_{1}+\mathbf{x}_{3})\otimes(\varepsilon
\mathbf{x}_{1}+\mathbf{x}_{4})\otimes(\varepsilon^{-1}\mathbf{x}%
_{1}+\mathbf{x}_{4})+\mathbf{x}_{3}\otimes(-\mathbf{x}_{2}-\mathbf{x}%
_{4})\otimes\varepsilon^{-1}\mathbf{x}_{1}\\
+\mathbf{x}_{1}\otimes\mathbf{x}_{4}\otimes(-\varepsilon^{-1}\mathbf{x}%
_{1}-\varepsilon^{-1}\mathbf{x}_{3})+(\varepsilon\mathbf{x}_{2}+\mathbf{x}%
_{3})\otimes(-\varepsilon\mathbf{x}_{1}+\mathbf{x}_{2})\otimes(\varepsilon
^{-1}\mathbf{x}_{1}+\mathbf{x}_{2})\\
+(\mathbf{x}_{1}+\varepsilon\mathbf{x}_{2})\otimes(\varepsilon\mathbf{x}%
_{3}+\mathbf{x}_{4})\otimes(\varepsilon^{-1}\mathbf{x}_{3}-\mathbf{x}_{4}).
\end{multline*}
It is straight forward to verify that
\[
\lim\nolimits_{\varepsilon\rightarrow0}A_{\varepsilon}=A
\]
where%
\[
A=\mathbf{x}_{1}\otimes\mathbf{x}_{1}\otimes\mathbf{x}_{1}+\mathbf{x}%
_{1}\otimes\mathbf{x}_{3}\otimes\mathbf{x}_{3}+\mathbf{x}_{2}\otimes
\mathbf{x}_{2}\otimes\mathbf{x}_{1}+\mathbf{x}_{2}\otimes\mathbf{x}_{4}%
\otimes\mathbf{x}_{3}+\mathbf{x}_{3}\otimes\mathbf{x}_{2}\otimes\mathbf{x}%
_{2}+\mathbf{x}_{3}\otimes\mathbf{x}_{4}\otimes\mathbf{x}_{4}.
\]
Note that the sequence $A_{\varepsilon}$ exhibits \textsc{parafac} degeneracy:
as $\varepsilon\rightarrow0$, each of the summands becomes unbounded in
magnitude but $A_{\varepsilon}$ remains bounded (and in fact converges to $A$).

Regardless of whether $\mathbf{x}_{1},\mathbf{x}_{2},\mathbf{x}_{3}%
,\mathbf{x}_{4}\in\mathbb{R}^{n}$ or $\mathbb{C}^{n}$, it is clear that for
all $\varepsilon>0$,%
\[
\operatorname*{rank}(A_{\varepsilon})\leq5.
\]
Furthermore, one may show that $\operatorname*{rank}(A)=6$ over $\mathbb{C}$
and therefore $\operatorname*{rank}(A)\geq6$ over $\mathbb{R}$ (cf.\ remarks
at the end of Section \ref{sec:Ten}). In either case, $A$ is an instance in
$\mathbb{R}^{n\times n\times n}$ or $\mathbb{C}^{n\times n\times n}$ where the
approximation problem in \eqref{approx} has no solution for $r=5$ --- since
$\inf\nolimits_{\operatorname*{rank}(X)\leq5}\lVert A-X\rVert=0$ and
$\operatorname*{rank}(A)\geq6$ together imply that%
\[
\operatorname*{argmin}\nolimits_{\operatorname*{rank}(X)\leq5}\lVert
A-X\rVert=\varnothing.
\]
Hence the construction in \cite{BCLR} also yields an explicit example of a
best rank-$r$ approximation problem (over $\mathbb{R}$ and $\mathbb{C}$) that
has no solution.

\section{Existence of globally optimal solution for nonnegative tensor
approximations\label{sec:Exist}}

As we have mentioned in Section \ref{sec:Intro}, nonnegativity constraints are
often natural in the use of \textsc{parafac}. Empirical evidence from Bro's
chemometrics studies revealed that \textsc{parafac} degeneracy was never
observed when fitting nonnegative-valued data with a nonnegative
\textsc{parafac} model. This then led Harshman to conjecture that this is
always the case. The text of his e-mail had been reproduced in \cite{L1}.

The conjectured result involves demonstrating the existence of global minimum
over a non-compact feasible region and is thus not immediate. Nevertheless the
proof is still straightforward by the following observation: If a continuous
real-valued function has a non-empty compact sublevel set, then it has to
attain its infimum --- a consequence of the extreme value theorem. This is
essentially what we will show in the following proof for the nonnegative
\textsc{parafac} loss function (in fact, we will show that all sublevel sets
of the function are compact). We will use the $E$-norm in our proof for
simplicity, the result for other norms then follows from the equivalence of
all norms on finite-dimensional spaces. Essentially the same proof, but in
terms of the more customary $F$-norm, appeared in \cite{L1}. We will follow
the notations in Section \ref{sec:NTD}.

\begin{theorem}
\label{thmE}Let $A\in\mathbb{R}^{d_{1}\times\dots\times d_{k}}$ be
nonnegative. Then
\[
\inf\Bigl\{\left\Vert A-\sum\nolimits_{p=1}^{r}\delta_{p}\,\mathbf{u}%
_{p}\otimes\mathbf{v}_{p}\otimes\dots\otimes\mathbf{z}_{p}\right\Vert
_{E}\Bigm|\boldsymbol{\delta}\in\mathbb{R}_{+}^{r},\mathbf{u}_{p}\in
\Delta^{d_{1}-1},\dots,\mathbf{z}_{p}\in\Delta^{d_{k}-1},p=1,\dots,r\Bigr\}
\]
is attained.
\end{theorem}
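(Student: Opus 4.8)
The plan is to apply the recipe hinted at in the text: show that the nonnegative \textsc{parafac} loss function has compact (nonempty) sublevel sets, and then invoke the extreme value theorem. Write $f(\boldsymbol{\delta},\mathbf{u}_1,\dots,\mathbf{z}_r) = \lVert A - \sum_{p=1}^{r}\delta_p\,\mathbf{u}_p\otimes\mathbf{v}_p\otimes\dots\otimes\mathbf{z}_p\rVert_E$, viewed as a function on the domain $\mathbb{R}_+^r \times (\Delta^{d_1-1})^r \times\dots\times(\Delta^{d_k-1})^r$. This domain is closed, and $f$ is continuous (the outer product map is polynomial in its arguments, and the $E$-norm is continuous), so the only thing preventing a direct appeal to compactness is that the domain is unbounded in the $\boldsymbol{\delta}$-variable. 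Hence it suffices to show that for any $c \geq \inf f$, the sublevel set $S_c = \{(\boldsymbol{\delta},\mathbf{u}_1,\dots,\mathbf{z}_r) : f \leq c\}$ is bounded; being a closed subset of a closed set, it is then compact, and it is nonempty for $c$ slightly above the infimum.

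First I would fix the obvious facts: the simplex factors $\mathbf{u}_p \in \Delta^{d_1-1}$, etc., are already confined to compact sets, so boundedness of $S_c$ reduces to bounding $\lVert\boldsymbol{\delta}\rVert_1 = \sum_{p=1}^r \delta_p$ (all $\delta_p \geq 0$). The key computation is to exploit the multiplicativity of the $E$-norm on rank-$1$ nonnegative tensors, equation \eqref{norm1}, together with the fact that each $\mathbf{u}_p,\dots,\mathbf{z}_p$ has unit $\ell^1$-norm. Since all the vectors and scalars are nonnegative, the tensor $\sum_{p=1}^r \delta_p\,\mathbf{u}_p\otimes\dots\otimes\mathbf{z}_p$ is itself nonnegative, and its $E$-norm is just the sum of its entries; by linearity and \eqref{norm1} this sum equals $\sum_{p=1}^r \delta_p \lVert\mathbf{u}_p\rVert_1\cdots\lVert\mathbf{z}_p\rVert_1 = \sum_{p=1}^r \delta_p = \lVert\boldsymbol{\delta}\rVert_1$ — exactly the bookkeeping already carried out in the proof of Proposition \ref{prop:nntd}. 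Then the triangle inequality for the $E$-norm gives
\[
\lVert\boldsymbol{\delta}\rVert_1 = \Bigl\lVert \sum\nolimits_{p=1}^r \delta_p\,\mathbf{u}_p\otimes\dots\otimes\mathbf{z}_p \Bigr\rVert_E \leq \lVert A\rVert_E + \Bigl\lVert A - \sum\nolimits_{p=1}^r \delta_p\,\mathbf{u}_p\otimes\dots\otimes\mathbf{z}_p \Bigr\rVert_E = \lVert A\rVert_E + f \leq \lVert A\rVert_E + c
\]
on $S_c$. This uniformly bounds the $\boldsymbol{\delta}$-coordinates, so $S_c$ is bounded, hence compact.

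To finish: pick any point in the domain (e.g. $\boldsymbol{\delta} = 0$), giving $f = \lVert A\rVert_E$, so the infimum $m := \inf f$ is finite and $S_{m+1}$ (say) is nonempty and, by the above, compact. A continuous function on a nonempty compact set attains its minimum there, and since $S_{m+1}$ contains all points where $f$ is within $1$ of its infimum, that minimum over $S_{m+1}$ is the global infimum $m$; hence $m$ is attained, which is the assertion. I do not expect a genuine obstacle here — the argument is a clean application of extreme value plus a one-line norm estimate — but the one point requiring a little care is making sure the boundedness estimate uses \emph{nonnegativity} in an essential way (so that $\lVert\sum \delta_p\,\mathbf{u}_p\otimes\dots\otimes\mathbf{z}_p\rVert_E$ collapses to $\sum\delta_p$ rather than merely being bounded above by it); without nonnegativity, cancellation in the sum could keep the tensor's norm small while the $\delta_p$'s blow up, which is precisely the \textsc{parafac}-degeneracy phenomenon of Section \ref{SectDegen}. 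One should also remark explicitly that continuity of $f$ and closedness of the domain are what let us pass from ``bounded sublevel set'' to ``compact sublevel set,'' and that the extension to an arbitrary norm is immediate from the equivalence of norms on the finite-dimensional space $\mathbb{R}^{d_1\times\dots\times d_k}$, as already noted in Section \ref{sec:Ten}.
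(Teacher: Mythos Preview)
Your proof is correct and follows essentially the same strategy as the paper's: show that the sublevel sets of $f$ over $\mathbb{R}_+^r \times (\Delta^{d_1-1}\times\dots\times\Delta^{d_k-1})^r$ are compact by bounding $\boldsymbol{\delta}$ via the $E$-norm identity for nonnegative sums, then invoke the extreme value theorem. Your direct bound $\lVert\boldsymbol{\delta}\rVert_1 \leq \lVert A\rVert_E + c$ on $S_c$ is in fact slightly cleaner than the paper's version, which argues by contradiction and bounds only a single diverging $\delta_q$ rather than the whole sum at once.
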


\begin{proof}
Recall that $\mathbb{R}_{+}^{n}=\{\mathbf{x}\in\mathbb{R}^{n}\mid
\mathbf{x}\geq0\}$ and $\Delta^{n-1}=\{\mathbf{x}\in\mathbb{R}_{+}^{n}%
\mid\lVert\mathbf{x}\rVert_{1}=1\}$. We define the function $f:\mathbb{R}%
^{r}\times(\mathbb{R}^{d_{1}}\times\dots\times\mathbb{R}^{d_{k}}%
)^{r}\rightarrow\mathbb{R}$ by%
\begin{equation}
f(T):=\left\Vert A-\sum\nolimits_{p=1}^{r}\delta_{p}\,\mathbf{u}_{p}%
\otimes\mathbf{v}_{p}\otimes\dots\otimes\mathbf{z}_{p}\right\Vert _{E}
\label{eq:f}%
\end{equation}
where we let $T=(\delta_{1},\dots,\delta_{r};\mathbf{u}_{1},\mathbf{v}%
_{1},\dots,\mathbf{z}_{1};\dots;\mathbf{u}_{r},\mathbf{v}_{r},\dots
,\mathbf{z}_{r})$ denote the argument of $f$. Let $\mathcal{D}$ be the
following subset of $\mathbb{R}^{r}\times(\mathbb{R}^{d_{1}}\times\dots
\times\mathbb{R}^{d_{k}})^{r}=\mathbb{R}^{r(1+d_{1}+\dots+d_{k})}$,%
\[
\mathcal{D}:=\mathbb{R}_{+}^{r}\times(\Delta^{d_{1}-1}\times\dots\times
\Delta^{d_{k}-1})^{r}.
\]
Note that $\mathcal{D}$ is closed but unbounded. Let the infimum in question
be $\mu:=\inf\{f(T)\mid T\in\mathcal{D}\}$. We will show that the sublevel set
of $f$ restricted to $\mathcal{D}$,%
\[
\mathcal{E}_{\alpha}=\{T\in\mathcal{D}\mid f(T)\leq\alpha\}
\]
is compact for all $\alpha>\mu$ and thus the infimum of $f$ on $\mathcal{D}$
must be attained. The set $\mathcal{E}_{\alpha}=\mathcal{D}\cap f^{-1}%
(-\infty,\alpha]$ is closed since $f$ is continuous (by the continuity of
norm). It remains to show that $\mathcal{E}_{\alpha}$ is bounded. Suppose the
contrary. Then there exists a sequence $(T_{n})_{n=1}^{\infty}\subset
\mathcal{D}$ with $\lVert T_{n}\rVert_{1}\rightarrow\infty$ but $f(T_{n}%
)\leq\alpha$ for all $n$. Clearly, $\lVert T_{n}\rVert_{1}\rightarrow\infty$
implies that $\delta_{q}^{(n)}\rightarrow\infty$ for at least one
$q\in\{1,\dots,r\}$. Note that%
\[
f(T)\geq\Bigl\lvert \lVert A\rVert_{E}-\left\Vert \sum\nolimits_{p=1}%
^{r}\delta_{p}\,\mathbf{u}_{p}\otimes\mathbf{v}_{p}\otimes\dots\otimes
\mathbf{z}_{p}\right\Vert _{E}\Bigr\rvert.
\]
Since all terms involved in the approximant are nonnegative, we have%
\begin{align*}
\left\Vert \sum\nolimits_{p=1}^{r}\delta_{p}\,\mathbf{u}_{p}\otimes
\mathbf{v}_{p}\otimes\dots\otimes\mathbf{z}_{p}\right\Vert _{E}  &
=\sum\nolimits_{j_{1},\dots,j_{k}=1}^{d_{1},\dots,d_{k}}\sum\nolimits_{p=1}%
^{r}\delta_{p}u_{pj_{1}}v_{pj_{2}}\cdots z_{pj_{k}}\\
&  \geq\sum\nolimits_{j_{1},\dots,j_{k}=1}^{d_{1},\dots,d_{k}}\delta
_{q}u_{qj_{1}}v_{qj_{2}}\cdots z_{qj_{k}}\\
&  =\delta_{q}\sum\nolimits_{j_{1},\dots,j_{k}=1}^{d_{1},\dots,d_{k}}%
u_{qj_{1}}v_{qj_{2}}\cdots z_{qj_{k}}\\
&  =\delta_{q}\lVert\mathbf{u}_{q}\otimes\mathbf{v}_{q}\otimes\dots
\otimes\mathbf{z}_{q}\rVert_{E}\\
&  =\delta_{q}\lVert\mathbf{u}_{q}\rVert_{1}\lVert\mathbf{v}_{q}\rVert
_{1}\cdots\lVert\mathbf{z}_{q}\rVert_{1}\\
&  =\delta_{q}%
\end{align*}
where the last two equalities follow from \eqref{norm1} and $\lVert
\mathbf{u}_{q}\rVert_{1}=\lVert\mathbf{v}_{q}\rVert_{1}=\dots=\lVert
\mathbf{z}_{q}\rVert_{1}=1$. Hence, as $\delta_{q}^{(n)}\rightarrow\infty$,
$f(T_{n})\rightarrow\infty$ --- contradicting the assumption that
$f(T_{n})\leq\alpha$ for all $n$.
\end{proof}

The proof essentially shows that the function $f$ is \textit{coercive} --- a
real-valued function $f$ is said to be coercive for minimization if
$\lim_{\lVert\mathbf{x}\rVert\rightarrow+\infty}f(\mathbf{x})=+\infty$
\cite{BT}. This is a standard condition often used to guarantee that a
continuous function on a noncompact domain attains its global minimum and is
equivalent to saying that $f$ has bounded sublevel sets. A minor point to note
is that had we instead optimized over a sum of rank-$1$ terms $\mathbf{u}%
\otimes\mathbf{v}\otimes\dots\otimes\mathbf{z}$, the proof would fail because
the vectors $\mathbf{u},\mathbf{v},\dots,\mathbf{z}$ may be scaled by non-zero
positive scalars that product to $1$, i.e.
\[
\alpha\mathbf{u}\otimes\beta\mathbf{v}\otimes\dots\otimes\zeta\mathbf{z}%
=\mathbf{u}\otimes\mathbf{v}\otimes\dots\otimes\mathbf{z},\qquad\alpha
\beta\cdots\zeta=1.
\]
So for example $(n\mathbf{x})\otimes\mathbf{y}\otimes(\mathbf{z}/n)$ can have
diverging loading factors even while the outer-product remains fixed. We
avoided this by requiring that $\mathbf{u},\mathbf{v},\dots,\mathbf{z}$ be
unit vectors and having a $\boldsymbol{\delta}$ that records the magnitude.

The following proposition provides four useful characterizations of the
statement that the function $\operatorname*{rank}_{+}:\mathbb{R}_{+}%
^{d_{1}\times\dots\times d_{k}}\rightarrow\mathbb{R}$ is upper semicontinuous.
This is the nonnegative rank equivalent of a similar result in \cite{dSL}.

\begin{proposition}
\label{propEquiv}Let $r,k\in\mathbb{N}$ and let the topology on $\mathbb{R}%
_{+}^{d_{1}\times\dots\times d_{k}}$ be induced by the $E$-norm. The following
statements are equivalent; and since the last statement is true by Theorem
\ref{thmE}, so are the others.

\begin{enumerate}
[\upshape(a)]

\item The set $\mathcal{S}_{r}:=\{X\in\mathbb{R}_{+}^{d_{1}\times\dots\times
d_{k}}\mid\operatorname{rank}_{+}(X)\leq r\}$ is closed.

\item Every $A\in\mathbb{R}_{+}^{d_{1}\times\dots\times d_{k}}$,
$\operatorname{rank}_{+}(A)>r$, has a best nonnegative rank-$r$ approximation,
i.e.%
\[
\inf\{\lVert A-X\rVert_{E}\mid\operatorname{rank}_{+}(X)\leq r\}
\]
is attained (by some $X_{r}$ with $\operatorname{rank}_{+}(X_{r})\leq r$).

\item No $A\in\mathbb{R}_{+}^{d_{1}\times\dots\times d_{k}}$,
$\operatorname{rank}_{+}(A)>r$, can be approximated arbitrarily closely by
nonnegative tensors of strictly lower nonnegative rank, i.e.%
\[
\inf\{\lVert A-X\rVert_{E}\mid\operatorname{rank}_{+}(X)\leq r\}>0.
\]

\item No sequence $(X_{n})_{n=1}^{\infty}\subset\mathbb{R}_{+}^{d_{1}%
\times\dots\times d_{k}}$, $\operatorname{rank}_{+}(X_{n})\leq r$, can
converge to $A\in\mathbb{R}_{+}^{d_{1}\times\dots\times d_{k}}$ with
$\operatorname{rank}_{+}(A)>r$.
\end{enumerate}
\end{proposition}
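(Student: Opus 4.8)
The plan is to establish the cycle of implications (a) $\Rightarrow$ (b) $\Rightarrow$ (c) $\Rightarrow$ (d) $\Rightarrow$ (a), after which the final remark follows since Theorem \ref{thmE} already gives (b): for \emph{every} nonnegative $A$ the infimum $\inf\{\lVert A-X\rVert_E\mid\operatorname{rank}_+(X)\le r\}$ is attained, because Theorem \ref{thmE} says the infimum over the parametrized family indexed by $\mathcal D$ is attained and, by Proposition \ref{prop:nntd}, that family sweeps out exactly the set $\mathcal S_r$. So the real work is the four routine-looking implications.

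First I would do (a) $\Rightarrow$ (b): if $\mathcal S_r$ is closed, then for any $A$ the distance function $X\mapsto\lVert A-X\rVert_E$ is continuous and coercive on the closed set $\mathcal S_r$ (coercive because $\lVert A - X\rVert_E \ge \lVert X\rVert_E - \lVert A\rVert_E \to \infty$), hence attains its infimum on $\mathcal S_r$ by the extreme value theorem applied to a compact sublevel set — exactly the argument pattern used in the proof of Theorem \ref{thmE}. Next, (b) $\Rightarrow$ (c): if $\operatorname{rank}_+(A)>r$ and the infimum in (b) is attained by some $X_r$ with $\operatorname{rank}_+(X_r)\le r$, then $\lVert A-X_r\rVert_E$ cannot be $0$, since $A=X_r$ would force $\operatorname{rank}_+(A)\le r$; hence the infimum is strictly positive. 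Then (c) $\Rightarrow$ (d) is essentially a restatement: if some sequence $(X_n)\subset\mathcal S_r$ converged to $A$ with $\operatorname{rank}_+(A)>r$, then $\lVert A-X_n\rVert_E\to 0$ would make the infimum in (c) equal to $0$, contradiction. Finally (d) $\Rightarrow$ (a): to show $\mathcal S_r$ is closed, take any convergent sequence $(X_n)\subset\mathcal S_r$ with limit $A$; if $A\notin\mathcal S_r$ then $\operatorname{rank}_+(A)>r$, and $(X_n)$ would be a sequence of nonnegative tensors of nonnegative rank at most $r$ converging to $A$, which (d) forbids; so $A\in\mathcal S_r$.

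The step I expect to require the most care is verifying in (a) $\Rightarrow$ (b) that the set $\mathcal S_r$ is precisely the image of $f$'s domain restriction — i.e.\ that $\{X \mid \operatorname{rank}_+(X)\le r\}$ coincides with $\{\sum_{p=1}^r \delta_p\,\mathbf u_p\otimes\cdots\otimes\mathbf z_p \mid \boldsymbol\delta\in\mathbb R_+^r,\ \mathbf u_p\in\Delta^{d_1-1},\dots,\mathbf z_p\in\Delta^{d_k-1}\}$. One inclusion is definitional; the other is Proposition \ref{prop:nntd} applied to each $X$ with $\operatorname{rank}_+(X)=r'\le r$ (padding with zero terms if $r'<r$, and handling $X=0$ separately). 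Everything else is a bookkeeping exercise in continuity and the definition of nonnegative rank, with no genuine obstacle. The use of the $E$-norm is purely for convenience; since all norms on the finite-dimensional space $\mathbb R^{d_1\times\cdots\times d_k}$ induce the same topology, each of (a)--(d) is norm-independent, so the equivalence (and hence all four statements) holds for any norm.
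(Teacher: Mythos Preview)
Your proposal is correct and follows essentially the same approach as the paper: establish the cycle (a) $\Rightarrow$ (b) $\Rightarrow$ (c) $\Rightarrow$ (d) $\Rightarrow$ (a), with (a) $\Rightarrow$ (b) handled by a compactness/coercivity argument on $\mathcal S_r$ and the remaining implications treated as routine. Your explicit identification of $\mathcal S_r$ with the parametrized family via Proposition~\ref{prop:nntd}, and your attribution of Theorem~\ref{thmE} to statement (b) rather than (d), are both reasonable elaborations on points the paper leaves implicit.
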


\begin{proof}
(a) $\Rightarrow$ (b): Suppose $\mathcal{S}_{r}$ is closed. Since the set
$\{X\in\mathbb{R}_{+}^{d_{1}\times\dots\times d_{k}}\mid\lVert A-X\rVert
\leq\lVert A\rVert\}$ intersects $\mathcal{S}_{r}$ non-trivially (e.g.\ $0$ is
in both sets). Their intersection $\mathcal{T}=\{X\in\mathbb{R}_{+}%
^{d_{1}\times\dots\times d_{k}}\mid\operatorname{rank}_{+}(X)\leq r,\lVert
A-X\rVert\leq\lVert A\rVert\}$ is a non-empty compact set. Now observe that%
\[
\delta:=\inf\{\lVert A-X\rVert\mid X\in\mathcal{S}_{r}\}=\inf\{\lVert
A-X\rVert\mid X\in\mathcal{T}\}
\]
since any $X^{\prime}\in\mathcal{S}_{r}\backslash\mathcal{T}$ must have
$\lVert A-X^{\prime}\rVert>\lVert A\rVert$ while we know that $\delta
\leq\lVert A\rVert$. By the compactness of $\mathcal{T}$, there exists
$X_{\ast}\in\mathcal{T}$ such that $\lVert A-X_{\ast}\rVert=\delta$. So the
required infimum is attained by $X_{\ast}\in\mathcal{T}\subset\mathcal{S}$.
The remaining implications (b) $\Rightarrow$ (c) $\Rightarrow$ (d)
$\Rightarrow$ (a) are obvious.
\end{proof}

In the language of \cite{BCS, Strass}, this says that `nonnegative border
rank' coincides with nonnegative rank. An immediate corollary is that the
$E$-norm in Theorem \ref{thmE} and Proposition \ref{propEquiv} may be replaced
by any other norm. In fact we will see later that we may replace norms with
more general measures of proximity.

\begin{corollary}
\label{cor:NTA}Let $A=%
\llbracket
a_{j_{1}\cdots j_{k}}%
\rrbracket
\in\mathbb{R}^{d_{1}\times\dots\times d_{k}}$ be nonnegative and
$\lVert\,\cdot\,\rVert:\mathbb{R}^{d_{1}\times\dots\times d_{k}}%
\rightarrow\lbrack0,\infty)$ be an arbitrary norm. Then
\[
\inf\Bigl\{\left\Vert A-\sum\nolimits_{p=1}^{r}\delta_{p}\,\mathbf{u}%
_{p}\otimes\mathbf{v}_{p}\otimes\dots\otimes\mathbf{z}_{p}\right\Vert
\Bigm|\boldsymbol{\delta}\in\mathbb{R}_{+}^{r},\mathbf{u}_{p}\in\Delta
^{d_{1}-1},\dots,\mathbf{z}_{p}\in\Delta^{d_{k}-1},p=1,\dots,r\Bigr\}
\]
is attained.
\end{corollary}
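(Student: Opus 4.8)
The plan is to reduce the statement to Theorem~\ref{thmE} by exploiting the equivalence of all norms on the finite-dimensional space $\mathbb{R}^{d_{1}\times\dots\times d_{k}}$. Fix constants $c,C>0$ with $c\lVert X\rVert_{E}\le\lVert X\rVert\le C\lVert X\rVert_{E}$ for all $X$. Working over the very same domain $\mathcal{D}=\mathbb{R}_{+}^{r}\times(\Delta^{d_{1}-1}\times\dots\times\Delta^{d_{k}-1})^{r}$ used in the proof of Theorem~\ref{thmE} (note that the feasible set in the corollary is parametrized exactly as there, so no reparametrization is needed), define $g(T):=\lVert A-\sum_{p=1}^{r}\delta_{p}\,\mathbf{u}_{p}\otimes\mathbf{v}_{p}\otimes\dots\otimes\mathbf{z}_{p}\rVert$ and let $f$ be the $E$-norm objective of Theorem~\ref{thmE}. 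First I would note that $g$ is continuous, being the composition of the continuous map $T\mapsto A-\sum_{p}\delta_{p}\,\mathbf{u}_{p}\otimes\dots\otimes\mathbf{z}_{p}$ with a norm; hence each sublevel set $\mathcal{E}_{\alpha}^{g}=\{T\in\mathcal{D}\mid g(T)\le\alpha\}$ is closed in $\mathcal{D}$, which is itself closed.

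Next I would transfer coercivity from $f$ to $g$. Inside the proof of Theorem~\ref{thmE} it is shown that $\lVert T\rVert_{1}\to\infty$ within $\mathcal{D}$ forces some $\delta_{q}^{(n)}\to\infty$, and then $f(T)\ge\bigl\lvert\lVert A\rVert_{E}-\lVert\sum_{p}\delta_{p}\,\mathbf{u}_{p}\otimes\dots\otimes\mathbf{z}_{p}\rVert_{E}\bigr\rvert\ge\lvert\lVert A\rVert_{E}-\delta_{q}\rvert\to\infty$. Since $g\ge c\,f$, the same conclusion holds for $g$: it is coercive on $\mathcal{D}$, so every $\mathcal{E}_{\alpha}^{g}$ is bounded, hence compact. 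For any $\alpha>\mu:=\inf\{g(T)\mid T\in\mathcal{D}\}$ the set $\mathcal{E}_{\alpha}^{g}$ is nonempty, so by the extreme value theorem $g$ attains a minimum over $\mathcal{E}_{\alpha}^{g}$ at some $T_{\ast}$; since no point of $\mathcal{D}\setminus\mathcal{E}_{\alpha}^{g}$ can improve on the value $\alpha$, $T_{\ast}$ is a global minimizer and $g(T_{\ast})=\mu$. That is precisely the asserted attainment.

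An alternative route, which I would mention, bypasses coercivity: closedness of $\mathcal{S}_{r}=\{X\geq0\mid\operatorname*{rank}_{+}(X)\le r\}$ (statement (a) of Proposition~\ref{propEquiv}) is a purely topological property, and every norm on $\mathbb{R}^{d_{1}\times\dots\times d_{k}}$ induces the same topology, so $\mathcal{S}_{r}$ is closed for $\lVert\,\cdot\,\rVert$ as well; running the argument of Proposition~\ref{propEquiv}, (a)$\Rightarrow$(b), verbatim with $\lVert\,\cdot\,\rVert$ in place of $\lVert\,\cdot\,\rVert_{E}$ then shows the infimum over $\mathcal{S}_{r}$ is attained, and Proposition~\ref{prop:nntd} identifies $\mathcal{S}_{r}$ with the normalized parametrized family in the corollary's statement. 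I do not expect a genuine obstacle here — the content is entirely in Theorem~\ref{thmE}. The one point deserving a moment's care is that what makes coercivity (not merely continuity) transfer is the \emph{quantitative} lower bound $g\ge c\,f$, i.e.\ the constructive form of norm equivalence; a minor secondary bookkeeping point, needed only for the second route, is the identification of $\mathcal{S}_{r}$ with the simplex-parametrized family via Proposition~\ref{prop:nntd}.
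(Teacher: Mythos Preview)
Your proposal is correct, and in fact your alternative route is precisely the paper's own proof: invoke equivalence of norms to conclude that the topology (hence the closedness of $\mathcal{S}_{r}$, statement~(a) of Proposition~\ref{propEquiv}) is norm-independent, then read off statement~(b) for the arbitrary norm. Your primary route---transferring coercivity of $f$ to $g$ via the quantitative inequality $g\ge c\,f$ and then applying the extreme value theorem directly on $\mathcal{D}$---is a sound and slightly more self-contained variant that bypasses Proposition~\ref{propEquiv}; it trades the topological detour for an explicit sublevel-set argument, but the underlying content is the same.
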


\begin{proof}
This simply follows from the fact that all norms on finite dimensional spaces
are equivalent and so induce the same topology on $\mathbb{R}_{+}^{d_{1}%
\times\dots\times d_{k}}$. So Proposition \ref{propEquiv} holds for any norms.
In particular, the statement (b) in Proposition \ref{propEquiv} for an
arbitrary norm $\lVert\,\cdot\,\rVert$ is exactly the result desired here.
\end{proof}

Corollary~\ref{cor:NTA} implies that the \textsc{parafac} degeneracy discussed
in Section~\ref{SectDegen} does not happen for nonnegative approximations of
nonnegative tensors. There is often a simplistic view of \textsc{parafac}
degeneracy as being synonymous to `between component cancellation' and thus
cannot happen for nonnegative tensor approximation since it is `purely
additive with no cancellation between parts' \cite{LS1, NTF}. While it
provides an approximate intuitive picture, this point of view is flawed since
\textsc{parafac} degeneracy is not the same as `between component
cancellation'. There is cancellation in $n\mathbf{x}\otimes\mathbf{y}%
\otimes\mathbf{z}-(n+\frac{1}{n})\mathbf{x}\otimes\mathbf{y}\otimes\mathbf{z}$
but the sequence exhibits no \textsc{parafac} degeneracy. Conversely, the
sequence of nonnegative tensors%
\[
A_{n}=\left[
\begin{array}
[c]{cc}%
0 & 1\\
1 & 1/n
\end{array}
\right\vert \!\left.
\begin{array}
[c]{cc}%
1 & 1/n\\
1/n & 1/n^{2}%
\end{array}
\right]  \in\mathbb{R}^{2\times2\times2}%
\]
may each be decomposed nonnegatively as%
\begin{equation}
A_{n}=A+\frac{1}{n}B+\frac{1}{n^{2}}C \label{eq2}%
\end{equation}
with $A,B,C\in\mathbb{R}^{2\times2\times2}$ given by%
\[
A=\left[
\begin{array}
[c]{cc}%
0 & 1\\
1 & 0
\end{array}
\right\vert \!\left.
\begin{array}
[c]{cc}%
1 & 0\\
0 & 0
\end{array}
\right]  ,\quad B=\left[
\begin{array}
[c]{cc}%
0 & 0\\
0 & 1
\end{array}
\right\vert \!\left.
\begin{array}
[c]{cc}%
1 & 0\\
1 & 0
\end{array}
\right]  ,\quad C=\left[
\begin{array}
[c]{cc}%
0 & 0\\
0 & 0
\end{array}
\right\vert \!\left.
\begin{array}
[c]{cc}%
0 & 0\\
0 & 1
\end{array}
\right]  ,
\]
and each may in turn be decomposed into a sum of rank-$1$ terms. While there
is no `between component cancellation' among these rank-$1$ summands, it is
known \cite{dSL} that the convergence%
\[
\lim_{n\rightarrow\infty}A_{n}=A
\]
exhibits \textsc{parafac} degeneracy over $\mathbb{R}^{2\times2\times2}$ or
$\mathbb{C}^{2\times2\times2}$, where there are decompositions of $A_{n}$
different from the one given in \eqref{eq2} exhibiting \textsc{parafac}
degeneracy. That such decompositions cannot happen over $\mathbb{R}%
_{+}^{2\times2\times2}$ is precisely the statement of
Proposition~\ref{propEquiv}, which we proved by way of Theorem~\ref{thmE}.

\section{Br\`{e}gman divergences\label{secBreg}}

In many applications, a norm may not be the most suitable measure of
proximity. Other measures based on entropy, margin, spectral separation,
volume, etc, are often used as loss functions in matrix and tensor
approximations. Such measures may not even be a metric, an example being the
Br\`{e}gman divergence \cite{Bre, DT, EOM}, a class of proximity measures that
often have information theoretic or probabilistic interpretations. In the
definition below, $\operatorname{ri}(\Omega)$ denotes the relative interior of
$\Omega$, i.e.\ the interior of $\Omega$ regarded as a subset of its affine
hull; $\lVert\,\cdot\,\rVert$ is any arbitrary norm on $\mathbb{R}%
^{d_{1}\times\dots\times d_{k}}$ --- again the choice of which is immaterial
since all norms induce the same topology on $\Omega$.

\begin{definition}
\label{def:bregman}Let $\varnothing\neq\Omega\subseteq\mathbb{R}^{d_{1}%
\times\dots\times d_{k}}$ be a closed {convex set}. Let $\varphi
:\Omega\rightarrow\mathbb{R}$ be {continuously differentiable }on
$\operatorname{ri}(\Omega)$ and strictly convex and continuous on $\Omega$.
The function $D_{\varphi}:\Omega\times\operatorname{ri}(\Omega)\rightarrow
\mathbb{R}$ defined by%
\[
D_{\varphi}(A,B)=\varphi(A)-\varphi(B)-\langle\nabla\varphi(B),A-B\rangle
\]
is a \textbf{Br\`{e}gman divergence} if

\begin{enumerate}
[\upshape (i)]

\item For any fixed $A\in\Omega$, the sublevel set%
\[
\mathcal{L}_{\alpha}(A)=\{X\in\operatorname{ri}(\Omega)\mid D_{\varphi
}(A,X)\leq\alpha\}
\]
is bounded for all $\alpha\in\mathbb{R}$.

\item Let $(X_{n})_{n=1}^{\infty}\subset\operatorname{ri}(\Omega)$ and
$A\in\Omega$. If%
\[
\lim\nolimits_{n\rightarrow\infty}\lVert A-X_{n}\rVert=0,
\]
then%
\[
\lim\nolimits_{n\rightarrow\infty}D_{\varphi}(A,X_{n})=0.
\]

\item Let $(X_{n})_{n=1}^{\infty}\subset\operatorname{ri}(\Omega)$,
$A\in\Omega$, and $(A_{n})_{n=1}^{\infty}\subset\Omega$. If%
\[
\lim\nolimits_{n\rightarrow\infty}\lVert A-X_{n}\rVert=0,\quad\limsup
\nolimits_{n\rightarrow\infty}\lVert A_{n}\rVert<\infty,\quad\lim
\nolimits_{n\rightarrow\infty}D_{\varphi}(A_{n},X_{n})=0,
\]
then%
\[
\lim\nolimits_{n\rightarrow\infty}\lVert A_{n}-X_{n}\rVert=0.
\]

\end{enumerate}
\end{definition}

Note that $D_{\varphi}(A,B)\geq0$ and that $D_{\varphi}(A,B)=0$ iff $A=B$ by
the strict convexity of $\varphi$. However $D_{\varphi}$ need not satisfy the
triangle inequality nor must it be symmetric in its two arguments. So a
Br\`{e}gman divergence is not a metric in general.

Br\`{e}gman divergences are particularly important in nonnegative matrix and
tensor decompositions \cite{LS1, NTF}. In fact, one of the main novelty of
\textsc{nmf} as introduced by Lee and Seung \cite{LS1} over the earlier
studies in technometrics \cite{CDP, KtB, Paa2} is their use of the
\textit{Kullback-Leibler divergence} \cite{KL} as a proximity
measure\footnote{This brought back memories of the many intense e-mail
exchanges with Harshman, of which one was about the novelty of \textsc{nmf}.
His fervently argued messages will be missed.}. The \textsc{kl} divergence is
defined for nonnegative matrices in \cite{LS1} but it is straightforward to
extend the definition to nonnegative tensors. For $A\in\mathbb{R}_{+}%
^{d_{1}\times\dots\times d_{k}}$ and $B\in\operatorname{ri}(\mathbb{R}%
_{+}^{d_{1}\times\dots\times d_{k}})$, this is%
\[
D_{\operatorname{KL}}(A,B)=\sum\nolimits_{j_{1},\dots,j_{k}=1}^{d_{1}%
,\dots,d_{k}}\Bigl[a_{j_{1}\cdots j_{k}}\log\Bigl(\frac{a_{j_{1}\cdots j_{k}}%
}{b_{j_{1}\cdots j_{k}}}\Bigr)-a_{j_{1}\cdots j_{k}}+b_{j_{1}\cdots j_{k}%
}\Bigr],
\]
where $0\log0$ is taken to be $0$, the limiting value. It comes from the
following choice of $\varphi$,%
\[
\varphi_{\operatorname{KL}}(A)=\sum\nolimits_{j_{1},\dots,j_{k}=1}%
^{d_{1},\dots,d_{k}}a_{j_{1}\cdots j_{k}}\log a_{j_{1}\cdots j_{k}}.
\]
We note that Kullback and Liebler's original definition \cite{KL} was in terms
of probability distributions. The version that we introduced here is a slight
generalization. When $A$ and $B$ are probability distributions as in Section
\ref{sec:NTD}, then $\lVert A\rVert_{E}=\lVert B\rVert_{E}=1$ and our
definition reduces to the original one in \cite{KL},
\[
D_{\operatorname{KL}}(A,B)=\sum\nolimits_{j_{1},\dots,j_{k}=1}^{d_{1}%
,\dots,d_{k}}a_{j_{1}\cdots j_{k}}\log\Bigl(\frac{a_{j_{1}\cdots j_{k}}%
}{b_{j_{1}\cdots j_{k}}}\Bigr).
\]
In this case $D_{\operatorname{KL}}(A,B)$ may also be interpreted as the
relative entropy of the respective distributions.

It is natural to ask if the following analogous nonnegative tensor
approximation problem for Br\`{e}gman divergence will always have a solution:%
\begin{equation}
X_{r}\in\operatorname*{argmin}\{D_{\varphi}\left(  A,X\right)  \mid
X\in\operatorname{ri}(\Omega),\operatorname*{rank}\nolimits_{+}(X)\leq r\}.
\label{prob1}%
\end{equation}
Clearly, the problem cannot be expected to have a solution in general since
$\operatorname{ri}(\Omega)$ is not closed. For example let $A=\mathbf{e}%
\otimes\mathbf{e}\otimes\mathbf{e}\in\mathbb{R}_{+}^{2\times2\times2}$ where
$\mathbf{e}=[1,0]^{\top}$, then%
\[
\inf\{D_{\operatorname{KL}}\left(  A,X\right)  \mid X\in\operatorname{ri}%
(\mathbb{R}_{+}^{2\times2\times2}),\operatorname*{rank}\nolimits_{+}%
(X)\leq1\}=0
\]
cannot be attained by any $\mathbf{x}\otimes\mathbf{y}\otimes\mathbf{z}%
\in\operatorname{ri}(\mathbb{R}_{+}^{2\times2\times2})$ since if we set
$\mathbf{x}_{n}=\mathbf{y}_{n}=\mathbf{z}_{n}=[1,n^{-1}]^{\top}$, then as
$n\rightarrow\infty$,%
\[
D_{\operatorname{KL}}\left(  A,\mathbf{x}_{n}\otimes\mathbf{y}_{n}%
\otimes\mathbf{z}_{n}\right)  =\frac{1}{n^{3}}\rightarrow0.
\]
This is simply a consequence of the way a Br\`{e}gman divergence is defined
and has nothing to do with any peculiarities of tensor rank, unlike the
example discussed in Section~\ref{SectDegen}. This difficulty may be avoided
by posing the problem for any closed (but not necessarily compact) subset of
$\operatorname{ri}(\Omega)$.

\begin{proposition}
\label{prop:bregman}Let $\Omega$ be a closed convex subset of $\mathbb{R}%
_{+}^{d_{1}\times\dots\times d_{k}}$ and $A\in\Omega$. Let $D_{\varphi}%
:\Omega\times\operatorname{ri}(\Omega)\rightarrow\mathbb{R}$ be a Br\`{e}gman
divergence. Then
\begin{equation}
\inf\{D_{\varphi}\left(  A,X\right)  \mid X\in K,\operatorname*{rank}%
\nolimits_{+}(X)\leq r\} \label{eq1}%
\end{equation}
is attained for any closed subset $K\subseteq\operatorname{ri}(\Omega)$.
\end{proposition}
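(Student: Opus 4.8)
The plan is to mimic the proof of Theorem~\ref{thmE} by exhibiting a nonempty compact sublevel set for the function $X \mapsto D_\varphi(A,X)$ restricted to the feasible region, so that the extreme value theorem delivers a minimizer. First I would parametrize nonnegative rank-$r$ tensors as in Proposition~\ref{prop:nntd}: write the generic competitor as $X = \sum_{p=1}^r \delta_p\, \mathbf{u}_p \otimes \cdots \otimes \mathbf{z}_p$ with $\boldsymbol{\delta} \in \mathbb{R}_+^r$ and $\mathbf{u}_p \in \Delta^{d_1-1}, \dots, \mathbf{z}_p \in \Delta^{d_k-1}$, so that the parameter domain is $\mathcal{D} = \mathbb{R}_+^r \times (\Delta^{d_1-1} \times \cdots \times \Delta^{d_k-1})^r$ as before, and restrict further to the closed subset $\mathcal{D}_K = \{T \in \mathcal{D} \mid X(T) \in K\}$ corresponding to the closed set $K \subseteq \operatorname{ri}(\Omega)$. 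Set $g(T) = D_\varphi(A, X(T))$ and $\mu = \inf\{g(T) \mid T \in \mathcal{D}_K\}$; the goal is to show that for some $\alpha > \mu$ the sublevel set $\mathcal{E}_\alpha = \{T \in \mathcal{D}_K \mid g(T) \leq \alpha\}$ is compact and nonempty, which forces the infimum to be attained.

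The nonemptiness of $\mathcal{E}_\alpha$ is immediate from the definition of $\mu$. For closedness, I would argue that $g$ is continuous on $\mathcal{D}_K$: the map $T \mapsto X(T)$ is continuous into $K \subseteq \operatorname{ri}(\Omega)$, and $D_\varphi(A, \cdot)$ is continuous on $\operatorname{ri}(\Omega)$ since $\varphi$ is continuously differentiable there (so $\varphi$, $\nabla\varphi$, and hence $D_\varphi(A,B) = \varphi(A) - \varphi(B) - \langle \nabla\varphi(B), A - B\rangle$ all vary continuously in $B$); thus $\mathcal{E}_\alpha = \mathcal{D}_K \cap g^{-1}(-\infty,\alpha]$ is closed. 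The main work is boundedness. Here is where I would invoke condition~(i) in Definition~\ref{def:bregman}: for the fixed $A$, the set $\mathcal{L}_\alpha(A) = \{Y \in \operatorname{ri}(\Omega) \mid D_\varphi(A,Y) \leq \alpha\}$ is bounded in $\mathbb{R}^{d_1 \times \cdots \times d_k}$. Consequently, every $T \in \mathcal{E}_\alpha$ yields a tensor $X(T)$ lying in the bounded set $\mathcal{L}_\alpha(A)$, so in particular $\lVert X(T) \rVert_E \leq M$ for some constant $M$ depending only on $\alpha$ and $A$.

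It remains to pull this bound on $X(T)$ back to a bound on the parameter $T$. Exactly as in the proof of Theorem~\ref{thmE}, nonnegativity gives $\lVert X(T) \rVert_E = \sum_{p=1}^r \delta_p \lVert \mathbf{u}_p \otimes \cdots \otimes \mathbf{z}_p \rVert_E = \sum_{p=1}^r \delta_p$ because each $\lVert \mathbf{u}_p \rVert_1 = \cdots = \lVert \mathbf{z}_p \rVert_1 = 1$; hence $\lVert \boldsymbol{\delta} \rVert_1 = \sum_p \delta_p = \lVert X(T) \rVert_E \leq M$. Since the simplex factors are automatically bounded, this bounds $\lVert T \rVert_1$, so $\mathcal{E}_\alpha$ is bounded, therefore compact. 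By the extreme value theorem $g$ attains $\mu$ at some $T_* \in \mathcal{E}_\alpha \subseteq \mathcal{D}_K$, and $X_r := X(T_*) \in K$ with $\operatorname{rank}_+(X_r) \leq r$ attains the infimum in~\eqref{eq1}. The main obstacle is the continuity and closedness bookkeeping at the boundary: one must make sure the competitors $X(T)$ genuinely stay inside $\operatorname{ri}(\Omega)$ (guaranteed since $K \subseteq \operatorname{ri}(\Omega)$ and $K$ is where we optimize) so that $D_\varphi(A, X(T))$ is well-defined and continuous throughout — conditions~(ii) and~(iii) of Definition~\ref{def:bregman} are not actually needed for this particular statement, only condition~(i), which is precisely the bounded-sublevel-set hypothesis that replaces the coercivity argument used for norms. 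A remark to that effect, noting which hypotheses are used, would close the proof.
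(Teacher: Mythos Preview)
Your proof is correct and follows essentially the same strategy as the paper's: use condition~(i) of Definition~\ref{def:bregman} to bound the sublevel set, continuity of $X \mapsto D_\varphi(A,X)$ on $\operatorname{ri}(\Omega)$ to close it, and then apply the extreme value theorem on the resulting compact set. The only cosmetic difference is that the paper works directly in tensor space, invoking the closedness of $\mathcal{S}_r = \{X : \operatorname{rank}_+(X) \le r\}$ already established via Theorem~\ref{thmE} and Proposition~\ref{propEquiv}, whereas you pull everything back to the parameter domain $\mathcal{D}$ and re-derive the needed boundedness there; both routes yield the same compactness, and your observation that only condition~(i) is actually used matches the paper.
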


\begin{proof}
Recall that $\mathcal{S}_{r}:=\{X\in\mathbb{R}_{+}^{d_{1}\times\dots\times
d_{k}}\mid\operatorname*{rank}\nolimits_{+}(X)\leq r\}$. The statement is
trivial if $\operatorname*{rank}_{+}(A)\leq r$. So we will also assume that
$\operatorname*{rank}_{+}(A)\geq r+1$. Let $\mu$ be the infimum in \eqref{eq1}
and let $\alpha>\mu$. By (i) in Definition \ref{def:bregman}, the sublevel set
$\mathcal{L}_{\alpha}(A)$ is bounded and so its subset%
\[
K\cap\mathcal{S}_{r}\cap\mathcal{L}_{\alpha}(A)=\{X\in K\cap\mathcal{S}%
_{r}\mid D_{\varphi}(A,X)\leq\alpha\}
\]
must also be bounded. Note that $K\cap\mathcal{S}_{r}$ is closed. Since
$\varphi$ is continuously differentiable on $\operatorname{ri}(\Omega)$, the
function $X\mapsto D_{\varphi}(A,X)$ is continuous and so $K\cap
\mathcal{S}_{r}\cap\mathcal{L}_{\alpha}(A)$ is also closed. Hence $D_{\varphi
}(A,X)$ must attain $\mu$ on the compact set $K\cap\mathcal{S}_{r}%
\cap\mathcal{L}_{\alpha}(A)$.
\end{proof}

As one can see from the proof, Proposition~\ref{prop:bregman} extends to any
other measure of proximity $d(A,X)$ where the function $X\mapsto d(A,X)$ is
continuous and coercive. Of course this is just a restatement of the problem,
the bulk of the work involved is usually to show that the proximity function
in question has those required properties.

\section{Aside: norm-regularized and orthogonal approximations}

We have often been asked about norm-regularized and orthogonal approximations
of tensors that are not necessarily nonnegative. These approximation problems
are useful in practice \cite{Como94:SP, Ha2, Paa}. Nevertheless these always
have optimal solutions for a much simpler reason --- they are continuous
optimization problems over compact feasible set, so the existence of a global
minima is immediate from the extreme value theorem (note that this is not the
case for nonnegative tensor approximation). In the following, we will let
$A\in\mathbb{R}^{d_{1}\times\dots\times d_{r}}$, not necessarily nonnegative.

Recall that $\operatorname*{O}(n,r)$, the set of $n\times r$ matrices ($r\leq
n$) with orthonormal columns, is compact in $\mathbb{R}^{n\times r}$. If we
impose orthonormality constraints on the normalized loading factors in
\eqref{parafac}, i.e.\ $[\mathbf{u}_{1},\dots,\mathbf{u}_{r}]\in
\operatorname*{O}(d_{1},r),\dots,[\mathbf{z}_{1},\dots,\mathbf{z}_{r}%
]\in\operatorname*{O}(d_{k},r)$, then it follows that $\lvert\lambda_{p}%
\rvert\leq\lVert A\rVert_{F}^{2}$ for $p=1,\dots,r$,
i.e.\ $\boldsymbol{\lambda}\in\lbrack-\lVert A\rVert_{F},\lVert A\rVert
_{F}]^{r}\subset\mathbb{R}^{r}$. Since the \textsc{parafac} objective is
continuous and we are effectively minimizing over the compact feasible region%
\[
\lbrack-\lVert A\rVert_{F},\lVert A\rVert_{F}]^{r}\times\operatorname*{O}%
(d_{1},r)\times\dots\times\operatorname*{O}(d_{k},r),
\]
this shows that orthogonal \textsc{parafac} always has a globally optimal solution.

Next, the regularization proposed in \cite{Paa} is to add to the
\textsc{parafac} objective terms proportional to the $2$-norm of each loading
factor, i.e.%
\begin{equation}
\left\Vert A-\sum\nolimits_{p=1}^{r}\mathbf{a}_{p}\otimes\mathbf{b}_{p}%
\otimes\dots\otimes\mathbf{c}_{p}\right\Vert _{F}^{2}+\rho\sum\nolimits_{p=1}%
^{r}(\lVert\mathbf{a}_{p}\rVert_{2}^{2}+\lVert\mathbf{b}_{p}\rVert_{2}%
^{2}+\dots+\lVert\mathbf{c}_{p}\rVert_{2}^{2}). \label{reg}%
\end{equation}
From constrained optimization theory, we know that, under some regularity
conditions, minimizing a continuous function $f(\mathbf{x}_{1},\dots
,\mathbf{x}_{k})$ under constraints $\lVert\mathbf{x}_{i}\rVert_{2}=r_{i}$,
$i=1,\dots,k,$ is \textit{equivalent} to minimizing the functional
$f(\mathbf{x}_{1},\dots,\mathbf{x}_{k})+\sum_{i=1}^{k}\rho_{i}\lVert
\mathbf{x}_{i}\rVert_{2}^{2}$ for appropriate $\rho_{1},\dots,\rho_{k}%
\in\mathbb{R}$. In a finite dimensional space, the sphere of radius $r_{i}$ is
compact, and so is the feasible set defined by $\lVert\mathbf{x}_{i}\rVert
_{2}=r_{i}$, $i=1,\dots,k$, and thus $f$ must attain its extrema. In the same
vein, \eqref{reg} is equivalent to an equality constrained optimization
problem and so norm-regularized \textsc{parafac} always has a globally optimal
solution. This approach may also be applied to regularizations other than the
one discussed here.

\section{Acknowledgements}

We thank the reviewers for their many helpful comments.


\begin{thebibliography}{99}                                                                                               %


\bibitem {BCLR}D.~Bini, M.~Capovani, G.~Lotti, and F.~Romani, ``$O(n^{2.7799}%
)$ complexity for $n\times n$ approximate matrix multiplication,''
\textit{Inform.\ Process.\ Lett.}, \textbf{8} (1979), no.~5, pp.~234--235.

\bibitem {BLR}D.~Bini, G.~Lotti, and F.~Romani, ``Approximate solutions for
the bilinear form computational problem,'' \textit{SIAM J.\ Comput.},
\textbf{9} (1980), no.~4, pp.~692--697.

\bibitem {Bre}L.~Br\`{e}gman, ``A relaxation method of finding a common point
of convex sets and its application to the solution of problems in convex
programming,'' \textit{U.S.S.R.\ Comput.\ Math.\ and Math.\ Phys.}, \textbf{7}
(1967), no.~3, pp.~620--631.

\bibitem {BT}J.~Brinkhuis and V.~Tikhomirov, \textit{Optimization: insights
and applications}, Princeton University Press, Princeton, NJ, 2005.

\bibitem {BdJ}R.~Bro and S.~de Jong, ``A fast non-negativity constrained least
squares algorithm,'' \textit{J.\ Chemometrics}, \textbf{11}, (1997), no.~5, pp.~393--401.

\bibitem {BS}R.~Bro and N.~Sidiropoulos, ``Least squares algorithms under
unimodality and non-negativity constraints,'' \textit{J.\ Chemometrics},
\textbf{12} (1998), no.~4, pp.~223--247.

\bibitem {BCS}P.~B\"{u}rgisser, M.~Clausen, and M.A.~Shokrollahi,
\textit{Algebraic complexity theory}, Grundlehren der mathematischen
Wissenschaften, \textbf{315}, Springer-Verlag, Berlin, Germany, 1996.

\bibitem {CarrC70:psy}J.D.~Carroll and J.J.~Chang, \textquotedblleft Analysis
of individual differences in multidimensional scaling via $n$-way
generalization of Eckart-Young decomposition,\textquotedblright%
\ \textit{Psychometrika}, \textbf{35} (1970), no.~3, pp.~283--319.

\bibitem {CDP}J.D.~Carroll, G.~De~Soete, and S.~Pruzansky, \textquotedblleft
Fitting of the latent class model via iteratively reweighted least squares
\textsc{candecomp} with nonnegativity constraints,\textquotedblright%
\ pp.~463--472, in \cite{CB}.

\bibitem {Como94:SP}P.~Comon, \textquotedblleft Independent component
analysis: a new concept?,\textquotedblright\ \textit{Signal Process.},
\textbf{36} (1994), no.~3, pp.~287--314.

\bibitem {CB}R.~Coppi and S.~Bolasco (Eds.), \textit{Multiway data analysis},
Elsevier Science, Amsterdam, Netherlands, 1989.

\bibitem {CGLM}P.~Comon, G.~Golub, L.-H.~Lim, and B.~Mourrain, ``Symmetric
tensors and symmetric tensor rank,'' \textit{SIAM J.\ Matrix Anal.\ Appl.},
\textbf{30} (2008), no.~3, pp.~1254--1279.

\bibitem {dSL}V.~de~Silva and L.-H.~Lim, ``Tensor rank and the ill-posedness
of the best low-rank approximation problem,'' \textit{SIAM J.\ Matrix
Anal.\ Appl.}, \textbf{30} (2008), no.~3, pp.~1084--1127.

\bibitem {LSI}S.~Deerwester, S.~Dumais, G.W.~Furnas, T.K.~Landauer,
R.~Harshman, ``Indexing by latent semantic analysis,''
\textit{J.\ Amer.\ Soc.\ Inform.\ Sci.}, \textbf{41} (1990), no.~6, pp.~391--407.

\bibitem {DT}I.S.~Dhillon and J.A.~Tropp, ``Matrix nearness problems using
Br\`{e}gman divergences,'' \textit{SIAM J.\ Matrix Anal.\ Appl.}, \textbf{29}
(2007), no.~4, pp.~1120--1146.

\bibitem {EY}C.~Eckart and G.~Young, ``The approximation of one matrix by
another of lower rank,'' \textit{Psychometrika}, \textbf{1} (1936), no.~3, pp.~211--218.

\bibitem {GSS}L.D.~Garcia, M.~Stillman, B.~Sturmfels, ``Algebraic geometry of
Bayesian networks,'' \textit{J.\ Symbolic Comput.}, \textbf{39} (2005),
no.~3--4, pp.~331--355.

\bibitem {GG}E.~Gaussier and C.~Goutte, ``Relation between \textsc{plsa} and
\textsc{nmf} and implications,'' \textit{Proc.\ Annual Int.\ SIGIR
Conf.\ Res.\ Develop.\ Inform.\ Retrieval} (SIGIR '05), \textbf{28} (2005) pp.~601--602.

\bibitem {GKZ1}I.M.~Gelfand, M.M.~Kapranov, and A.V.~Zelevinsky,
\textit{Discriminants, resultants, and multidimensional determinants},
Birkh\"{a}user Publishing, Boston, MA, 1994.

\bibitem {Ha}R.A.~Harshman, ``Foundations of the \textsc{parafac} procedure:
models and conditions for an explanatory multi-modal factor analysis,''
\textit{UCLA Working Papers in Phonetics}, \textbf{16} (1970), pp.~1--84.

\bibitem {Ha2}R.A.~Harshman and M.E.~Lundy. \textquotedblleft Data
preprocessing and the extended \textsc{parafac} model,\textquotedblright%
\ pp.~216--281, in H.G.~Law, C.W.~Snyder, J.A.~Hattie, and R.P.~McDonald
(Eds.), \textit{Research methods for multimode data analysis}, Praeger, New
York, NY, 1984.

\bibitem {Hi1}F.L.~Hitchcock, ``The expression of a tensor or a polyadic as a
sum of products,'' \textit{J.\ Math.\ Phys.}, \textbf{6} (1927), no.~1, pp.~164--189.

\bibitem {Hi2}F.L.~Hitchcock, ``Multiple invariants and generalized rank of a
$p$-way matrix or tensor,'' \textit{J.\ Math.\ Phys.}, \textbf{7} (1927),
no.~1, pp.~39--79.

\bibitem {Hoff}T.~Hofmann, ``Probabilistic Latent Semantic Indexing,''
\textit{Proc.\ Annual Int.\ SIGIR Conf.\ Res.\ Develop.\ Inform.\ Retrieval}
(SIGIR '99), \textbf{22} (1999), pp.~50--57.

\bibitem {EOM}A.N.~Iusem, ``Bregman distance,'' and ``Bregman function,''
pp.~152--154 in: M.~Hazewinkel (Ed.), \textit{Encyclopaedia of Mathematics},
Kluwer, Dordrecht, Netherlands, 1997.

\bibitem {Kn}D.E.~Knuth, \textit{The art of computer programming}, \textbf{2}:
seminumerical algorithms, 3rd Ed., Addision Wesley, Reading, MA, 1998.

\bibitem {KtB}W.P.~Krijnen and J.M.F.~Ten Berge, \textquotedblleft
Contrastvrije oplossingen van het \textsc{candecomp}/\textsc{parafac}%
-model,\textquotedblright\ \textit{Kwantitatieve Methoden}, \textbf{12}
(1991), no.~37, pp.~87--96.

\bibitem {KHL}J.B.~Kruskal, R.A.~Harshman, and M.E.~Lundy, \textquotedblleft
How $3$-MFA data can cause degenerate \textsc{parafac} solutions, among other
relationships,\textquotedblright\ pp.~115--122, in \cite{CB}.

\bibitem {KL}S.~Kullback and R.A.~Leibler, ``On information and sufficiency,''
\textit{Ann.\ Math.\ Statistics}, \textbf{22} (1951), no.~1, pp.~79--86.

\bibitem {LS1}D.D.~Lee and H.S.~Seung, ``Learning the parts of objects by
nonnegative matrix factorization,'' \textit{Nature}, \textbf{401} (1999), pp.~788--791.

\bibitem {L1}L.-H.~Lim, ``Optimal solutions to nonnegative \textsc{parafac}
/multilinear \textsc{nmf} always exist,'' \textit{Workshop on Tensor
Decompositions and Applications}, Centre International de rencontres
Math\'{e}matiques, Luminy, France, August 29--September 2, 2005.

\bibitem {Paa1}P.~Paatero, ``A weighted non-negative least squares algorithm
for three-way \textsc{parafac} factor analysis,'' \textit{Chemometrics
Intell.\ Lab.\ Syst.}, \textbf{38} (1997), no.~2, pp.~223--242.

\bibitem {Paa}P.~Paatero, ``Construction and analysis of degenerate
\textsc{parafac} models,'' \textit{J.\ Chemometrics}, \textbf{14} (2000),
no.~1, pp.~285--299.

\bibitem {Paa2}P.~Paatero and U.~Tapper, ``Positive matrix factorization: A
non-negative factor model with optimal utilization of error estimates of data
values,'' \textit{Environmetrics}, \textbf{5} (1994), no.~2, pp.~111--126.

\bibitem {NTF}A.~Shashua and T.~Hazan, ``Non-negative tensor factorization
with applications to statistics and computer vision,''
\textit{Proc.\ Int.\ Conf.\ Mach.\ Learn.}\ (ICML '05), \textbf{22} (2005), pp.~792--799.

\bibitem {SBG}A.~Smilde, R.~Bro, and P.~Geladi, \textit{Multi-way Analysis:
applications in the chemical sciences}, John Wiley, West Sussex, England, 2004.

\bibitem {Strass}V.~Strassen, ``Gaussian elimination is not optimal,''
\textit{Numer.\ Math.}, \textbf{13} (1969), pp.~354--356.
\end{thebibliography}
\end{document}